\newcommand{\R}{\mathbb{R}}
\newcommand{\N}{\mathbb{N}}
\newcommand{\Z}{\mathbb{Z}}
\newcommand{\Nc}{\mathcal{N}}
\newcommand{\Pro}{\mathbb{P}}
\newcommand{\Qt}{\widetilde{Q}}
\newcommand{\Qb}{\bar{Q}}
\newcommand{\A}{\mathscr{A}}
\newcommand{\epsb}{\partial_\varepsilon}
\newcommand{\abs}[1]{|#1|}
\newcommand{\Norm}[2]{\|#1\|_{#2}}
\newcommand{\BNorm}[2]{\Big\|#1\Big\|_{#2}}
\newtheorem{theorem}[equation]{Theorem}
\newtheorem{lemma}[equation]{Lemma}
\newtheorem{openproblem}[equation]{Open problem}
\newtheorem{corollary}[equation]{Corollary}
\theoremstyle{definition}
\newtheorem{defin}[equation]{Definition}
\newtheorem*{defin*}{Definition}
\newtheorem{remark}[equation]{Remark}
\newtheorem*{remark*}{Remark}
\newcommand{\rref}[1]{$\left(\ref{#1}\right)$}
\numberwithin{equation}{section}
\title{Almost Lipschitz-continuous wavelets in metric spaces via a new randomization of dyadic cubes}
\author{Tuomas Hyt\"onen \and Olli Tapiola}
\address{Department of Mathematics and Statistics, P.O.B. 68 (Gustaf H\"allstr\"omin katu 2b), FI-00014 University of Helsinki, Finland}
\email{tuomas.hytonen@helsinki.fi}
\email{olli.tapiola@helsinki.fi}
\date{October 8, 2013}
\keywords{metric space, dyadic cube, doubling property, wavelets, spline functions}
\subjclass[2010]{30L99 (Primary); 42C40, 41A15, 60D05 (Secondary)}
\thanks{Both authors are supported by the European Union through the ERC Starting Grant "Analytic-probabilistic methods for borderline singular integrals". T. H. is also supported by the Academy of Finland, grants 130166 and 133264.}
\begin{document}

\maketitle

\begin{abstract}
In any quasi-metric space of homogeneous type, Auscher and Hyt\"onen recently gave a construction of orthonormal wavelets with H\"older-continuity exponent $\eta>0$. However, even in a metric space, their exponent is in general quite small. In this paper, we show that the H\"older-exponent can be taken arbitrarily close to $1$ in a metric space. We do so by revisiting and improving the underlying construction of random dyadic cubes, which also has other applications.
\end{abstract}

\section{Introduction}

The representation of functions in terms special orthogonal expansions, like wavelet bases, is one of the central themes and tools in harmonic analysis, approximation theory, and their applications. While many kinds of useful wavelets have long been known in the Euclidean space $\R^d$ and some other special geometries, the first comprehensive construction of regular orthonormal wavelet bases (and not just frames, which had been known before \cite{denghan}) in abstract metric or even quasi-metric spaces was only recently obtained by Auscher and one of us \cite{auscherhytonen}. More precisely, it was shown in \cite{auscherhytonen} that any space of homogeneous type $(X,d,\mu)$ in the sense of Coifman and Weiss (i.e., a quasi-metric space equipped with a doubling measure) supports an orthonormal basis $\psi^k_\alpha$ with localization and regularity properties of the form
\begin{equation*}
\begin{split}
  \abs{\psi^k_\alpha(x)} &\leq\frac{\exp(-\gamma d(x,y^k_\alpha)\delta^{-k})}{\sqrt{\mu (B(y^k_\alpha,\delta^k))}}, \\
  \abs{\psi^k_\alpha(x)-\psi^k_\alpha(y)} &\leq\frac{\exp(-\gamma d(x,y^k_\alpha)\delta^{-k})}{\sqrt{\mu(B(y^k_\alpha,\delta^k))}}\Big(\frac{d(x,y)}{\delta^k}\Big)^\eta,\qquad
  d(x,y)\leq\delta^k.
\end{split}
\end{equation*}
That is, $\psi^k_\alpha$ is localized in a ball $B(y^k_\alpha,\delta^k)$, up to an exponentially decaying tail, and satisfies a H\"older-continuity estimate of exponent $\eta$ on the scale $\delta^k$.

The main theme of this paper is the value of the H\"older-exponent $\eta$ above. The construction of \cite{auscherhytonen} shows that it is some small but strictly positive number, $\eta>0$, and this is essentially the best that one can hope for in a general space of homogeneous type; indeed, it is known that there may only exist non-trivial H\"older-continuous functions for H\"older-exponents below a small threshold.

However, if we restrict ourselves to the most important case of actual metric spaces, the situation changes drastically. Now the distance itself is Lipschitz-continuous in both variables, and an abundance of other Lipschitz-continuous functions may be easily derived from it. Thus it is perfectly reasonable to inquire about the existence of Lipschitz-continuous wavelets (i.e., $\eta=1$ above), but the construction offered in \cite{auscherhytonen} does not seem to capture any substantial benefit from the restriction to an actual metric.

In this paper, we address the problem by offering a different construction of the metric wavelets, which allows us to obtain H\"older-regularity of any exponent $\eta<1$, strictly below but arbitrarily close to one. More precisely, we offer a new construction of the random dyadic cubes that served as the working engine of \cite{auscherhytonen}. In fact, the H\"older-regularity of the wavelets is a direct reflection of the probabilistic boundary regularity of the random dyadic cubes: given a point $x\in X$ , its probability of ending up close to the boundary of a random cube $Q^k_\alpha$ of side-length $\delta^k$ satisfies
\begin{equation}\label{eq:cubes}
  \mathbb{P}\Big(x\in Q^k_\alpha,\ d(x,(Q^k_\alpha)^c)<\varepsilon\Big)\leq C\Big(\frac{\varepsilon}{\delta^k}\Big)^\eta.
\end{equation}

The notion of random dyadic cubes has been instrumental for several recent advances in harmonic analysis, both in Euclidean \cite{hytonensharpbound, nazarovtreilvolberg, laceysawyer_etal, tolsa} and more abstract spaces \cite{nazarovreznikovvolberg}; however, aside from the construction of wavelets, the value of the boundary exponent $\eta$ in \eqref{eq:cubes} seems to be inessential for most of these applications, as long as it is positive. In metric spaces, the construction of these random cubes was first given by Martikainen and one of us \cite{hytonenmartikainen}, and then simplified and elaborated in \cite{hytonenkairema} and further in \cite{auscherhytonen}. In all these papers, the starting point of the randomization was the well-known deterministic construction due to Christ \cite{christ}. The abstract cubes of Christ are determined by two objects: the centre-points $z^k_\alpha$, and a partial order $\leq$, the ``parent-child'' relation, which determines the inclusion properties between cubes of 
different generations. When randomizing his construction in \cite{auscherhytonen,hytonenkairema,
hytonenmartikainen}, it 
hence appeared natural to randomize both the choice of the centre-points and that of the parent-child relation. Our present approach differs from these in that we keep the centre-points fixed, and only randomize the parent-child relation, which is actually simpler than the earlier abstract constructions. The reason that this strategy was not discovered before is probably its deviation from the Euclidean intuition: in $\R^d$, the cubes of a given generation are determined by their centre-points alone, and the construction of random cubes amounts to randomly shifting these centres. In an abstract space, it was a natural first guess that we do at least the same, and then whatever additional corrections are necessary. Even in such problems, where the exact value of the boundary exponent $\eta$ is irrelevant, we believe that our new approach may be useful for its simplicity.

We conclude the introduction by shortly discussing some motivation for our problem. Orthonormal wavelets serve as basic building blocks for resolutions of the identity in the style of the Littlewood--Paley theory, or the Calder\'on reproducing formula. Studying the rates of convergence of such resolutions for different functions naturally leads to function space norms of Besov or Triebel--Lizorkin type:
\begin{equation*}
  \Big(\sum_k [\delta^{-ks}\Norm{\Psi^k f}{L^p(\mu)}]^q\Big)^{1/q}\quad\text{or}\quad
  \BNorm{\Big(\sum_k [\delta^{-ks}\abs{\Psi^k f}]^q\Big)^{1/q}}{L^p(\mu)},\qquad
  \Psi^k f:=\sum_\alpha \psi^k_\alpha\langle\psi^k_\alpha,f\rangle,
\end{equation*}
where $s\in\R$ is a smoothness index. However, generally speaking such norms are well-behaved (for instance, equivalent under different choices of the wavelet resolution) only if the smoothness index is bounded by the regularity of the wavelets, $\abs{s}<\eta$. Our new construction, which provides wavelets of arbitrary H\"older-regularity $\eta<1$, should open the way for the wavelet approach to the metric space theory of function spaces of any smoothness index $s\in(-1,1)$. Pursuing this line of research in detail is, however, left for future investigation. For earlier developments in this direction, but based on other resolutions of the identity than ones arising from wavelets, see e.g. the extensive theory of Besov and Triebel--Lizorkin spaces built by Han, M\"uller and Yang \cite{hanmulleryang}, and the work of Yang and Zhou \cite{yangzhou} on a classical problem of Coifman and Weiss \cite{coifmanweiss} on the characterization of Hardy spaces on spaces of homogeneous type. However, the results of \cite{
hanmulleryang,yangzhou} are set up in somewhat more restrictive ``reverse doubling'' spaces, a condition that was shown to be unnecessary for the metric wavelet theory of~\cite{auscherhytonen}, and likewise for its present elaboration.

The paper is organized as follows. Section~\ref{set-up_etc} collects the basic background from earlier related papers, Section~\ref{boundaries_and_iterations} introduces general tools for handling the boundary regions of sets that we want to estimate, and the two subsequent sections present our new randomization of the dyadic cubes. Indeed, we provide two different randomizations serving different purposes: the aspects common to both versions are treated in Section~\ref{random_dyadic_systems}, and the individual features of the two versions in Section~\ref{two_different}. Finally, the application to the construction of wavelets is presented in Section~\ref{application}.

\section{Set-up, dyadic points and cubes}
\label{set-up_etc}

Throughout this paper $(X,d)$ is a metric space that satisfies the following (geometrical) \emph{doubling property}: there exists a constant 
$M \in \N := \{0,1,2,\ldots\}$ such that for every $x \in X$ and every $r > 0$, the open ball $B(x,r) := \{y \in X : d(x,z) < r\}$ 
can be covered by at most $M$ open balls of radius $r/2$. We call the space $(X,d)$ a \emph{doubling metric space} and the constant $M$ the \emph{doubling constant} 
of $X$. We need the following well-known properties of doubling metric spaces repeatedly in several proofs.

\begin{lemma}
  \label{dms_properties}
  For any doubling metric space $(X,d)$ with a doubling constant $M$ the following properties hold:
  \begin{enumerate}
   \item[$1)$] Any ball $B(x,r)$ can be covered by at most $\lfloor M\delta^{-\log_2 M} \rfloor$ balls $B(x_i, \delta r)$ for every 
               $\delta \in (0,1]$.
   \item[$2)$] Any ball $B(x,r)$ contains at most $\lfloor M\delta^{-\log_2 M} \rfloor$ centres $x_i$ of pairwise disjoint
               balls $B(x_i,\delta r)$ for every $\delta \in (0,1]$.
  \end{enumerate}
\end{lemma}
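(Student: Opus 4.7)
The plan is to prove part (1) by iterating the halving-doubling property a suitable number of times, and to deduce part (2) from part (1) by a pigeonhole argument that exploits the disjointness hypothesis directly, without passing to balls of half the radius.

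For part (1), given $\delta \in (0,1]$, I would choose the smallest $n \in \N$ such that $2^{-n} \leq \delta$; this gives $n \leq \log_2(2/\delta)$. Applying the defining doubling property $n$ times (each time refining each ball in the current cover by at most $M$ balls of half its radius), I get a cover of $B(x,r)$ by at most $M^n$ balls of radius $r\cdot 2^{-n} \leq \delta r$. The estimate
\[
  M^n \leq M^{\log_2(2/\delta)} = (2/\delta)^{\log_2 M} = M \cdot \delta^{-\log_2 M}
\]
then gives the claimed bound after taking the floor, since the cardinality is a natural number.

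For part (2), suppose $x_1, \dots, x_N \in B(x,r)$ are such that the balls $B(x_i, \delta r)$ are pairwise disjoint. I would cover $B(x,r)$ with open balls $B(y_j, \delta r)$ using part (1), obtaining at most $\lfloor M\delta^{-\log_2 M}\rfloor$ such balls. The key observation is that each $B(y_j, \delta r)$ contains at most one of the centres $x_i$: if both $x_i$ and $x_k$ lay in $B(y_j, \delta r)$, then by symmetry of the metric $y_j$ would lie in $B(x_i, \delta r) \cap B(x_k, \delta r)$, contradicting pairwise disjointness. A pigeonhole count then gives $N \leq \lfloor M\delta^{-\log_2 M}\rfloor$.

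The main subtlety, and the only place where one can easily lose a factor, is in part (2): the naive instinct is to cover with balls of radius $\delta r/2$ so that two centres inside the same covering ball are forced to satisfy $d(x_i,x_k)<\delta r$, yielding a contradiction with disjointness; but this blows the constant up to $M^2\delta^{-\log_2 M}$ via part (1) with parameter $\delta/2$. The trick that recovers the stated sharper constant is to cover with balls of the \emph{full} radius $\delta r$ and exploit the fact that the covering centre $y_j$ itself provides the common point that witnesses the would-be intersection. Everything else is bookkeeping with $\log_2 M$.
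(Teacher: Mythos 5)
Your proof is correct: the iteration of the doubling property gives the bound $M^n\le M\delta^{-\log_2 M}$ for part (1), and the symmetry trick (two centres in the same covering ball $B(y_j,\delta r)$ would force $y_j\in B(x_i,\delta r)\cap B(x_k,\delta r)$) yields part (2) with the stated constant. The paper itself gives no argument here but simply cites Lemma 2.3 of \cite{hytonenframework}, and your argument is essentially the standard proof behind that citation, so there is nothing further to compare.
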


\begin{proof}
  See e.g. Lemma 2.3 of \cite{hytonenframework}.
\end{proof}

\begin{lemma}
  \label{maximal_subsets}
  In every doubling metric space $(X,d)$ for any $\delta > 0$ there exists a countable maximal set $\A_\delta \subseteq X$ of $\delta$-separated points:
     \begin{enumerate}
       \item[$\bullet$] $d(x,y) \ge \delta$ for every $x,y \in \A_\delta$, $x \neq y$
       \item[$\bullet$] $\underset{x \in \A_\delta}{\min} \ d(x,z) < \delta$ for every $z \in X$.
     \end{enumerate}
\end{lemma}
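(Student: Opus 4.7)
The plan is to obtain the set $\A_\delta$ by a maximality argument and then deduce countability from the doubling property.

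First, let me set up the existence. Consider the collection $\mathcal{F}$ of all $\delta$-separated subsets of $X$, partially ordered by inclusion. The empty set belongs to $\mathcal{F}$, so $\mathcal{F}$ is nonempty, and given any chain $\{E_i\}_{i\in I}\subseteq\mathcal{F}$, the union $\bigcup_{i\in I}E_i$ is $\delta$-separated (any two of its points already lie in a common $E_i$). Hence Zorn's lemma supplies a maximal element $\A_\delta\in\mathcal{F}$.

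Second, the ``covering'' property drops out of maximality: if some $z\in X$ satisfied $d(x,z)\geq\delta$ for all $x\in\A_\delta$, then $\A_\delta\cup\{z\}$ would still be $\delta$-separated and strictly larger than $\A_\delta$, contradicting maximality. Thus $\min_{x\in\A_\delta}d(x,z)<\delta$ for every $z\in X$.

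Third, I handle countability by means of the doubling property. Fix any basepoint $x_0\in X$ and observe that $X=\bigcup_{n=1}^\infty B(x_0,n\delta)$, so it suffices to show $\A_\delta\cap B(x_0,n\delta)$ is finite for each $n$. For $y\in\A_\delta$ the balls $B(y,\delta/2)$ are pairwise disjoint by $\delta$-separation, and any centre in $B(x_0,n\delta)$ lies in the larger ball $B(x_0,n\delta)$ of radius $r=n\delta$, with $\delta/2=r\cdot(2n)^{-1}$ and $(2n)^{-1}\in(0,1]$ for $n\geq 1$. Applying part (2) of Lemma~\ref{dms_properties} to this ball with the parameter $(2n)^{-1}$ bounds the number of such centres by $\lfloor M(2n)^{\log_2 M}\rfloor$, which is finite. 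Hence $\A_\delta$ is a countable union of finite sets.

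The argument is essentially routine; the only conceptual point worth flagging is that a doubling metric space need not be declared separable upfront, so the countability of $\A_\delta$ cannot be obtained from a pre-existing countable dense set but must instead be extracted directly from the quantitative form of the doubling condition, which is exactly what Lemma~\ref{dms_properties}(2) provides.
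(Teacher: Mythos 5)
Your proof is correct, but it takes a genuinely different route from the paper's. You invoke Zorn's lemma to get a maximal $\delta$-separated subset of all of $X$, read off the covering property from maximality, and only afterwards use the doubling property (Lemma~\ref{dms_properties}, part 2) to show that $\A_\delta$ meets each ball $B(x_0,n\delta)$ in a finite set, hence is countable. The paper instead proceeds by exhaustion: it picks a finite maximal $\delta$-separated set $\A_1$ in $B(x_0,r)$, then recursively finite maximal $\delta$-separated sets $\A_k$ in $B(x_0,kr)\setminus\bigcup_{z\in\A^{k-1}}B(z,\delta)$, so that each partial union $\A^k$ is finite, $\delta$-separated and maximal in $B(x_0,kr)$, and finally takes $\A_\delta=\bigcup_k\A_k$; countability is then automatic and Zorn's lemma is never needed for the doubling case (the paper reserves Zorn for non-doubling spaces in Remark~\ref{maximal_sets_remark}). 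Your argument is shorter and more standard; the paper's is more constructive, exploiting from the outset that $\delta$-separated subsets of bounded sets are finite, so that no maximality principle on the whole space is required. One point you should make explicit: the lemma asserts a \emph{minimum}, not an infimum, so after the maximality step you need that only finitely many points of $\A_\delta$ lie in $B(z,\delta)$ — which is exactly your local-finiteness estimate applied with $x_0=z$ and $n=1$ — to conclude that the minimum is attained; as written, you assert the minimum before establishing this finiteness, and the paper flags the same issue at the end of its proof.
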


\begin{proof}
  Let $x_0 \in X$ and $r \ge \delta$. By Lemma \ref{dms_properties}, a finite maximal $\delta$-separated subset $\A_1 \subseteq B(x_0, r)$ exists. Also by Lemma 
  \ref{dms_properties}, for every $k \ge 2$ there exists a finite maximal $\delta$-separated subset $\A_k \subseteq B(x_0, kr) \setminus \bigcup_{z \in \A^{k-1}} B(z,\delta)$ 
  where $\A^{k-1} = \bigcup_{i=1}^{k-1} \A_i$. By construction, the set $\A^k$ is $\delta$-separated, finite and maximal in $B(x_0, kr)$ and thus, we can set 
  $\A_\delta = \bigcup_{i=1}^{\infty} \A_i$. The minimum in the second condition is attained by the first condition and the doubling property of the space $(X,d)$.
\end{proof}

\begin{remark}
  \label{maximal_sets_remark}
  \begin{enumerate}
   \item[1)] If we replace the minimum with infimum and allow the set $\A_\delta$ to be uncountable, the claim of Lemma \ref{maximal_subsets} holds for even non-doubling metric spaces. 
             We can prove this claim quite simply by applying Zorn's lemma to the collection of $\delta$-separated subsets of $X$.
  
  \item[2)] In particular, we can choose maximal sets of $\delta$-separated points from any subset of $X$. If the subset is bounded and the space is doubling, 
            the maximal set is finite.
  \end{enumerate}
\end{remark}

\subsection{Dyadic points}

The following theorem gives us sets of so called \emph{dyadic points} that resemble the centre-points of dyadic cubes in the Euclidean space.

\begin{theorem}
  \label{construction_of_dyadic_points}
  In every doubling metric space $(X,d)$ for any $\delta \in (0,1/2)$ there exist sets $\A_k := \{z_\alpha^k : \alpha \in \Nc_k \}$ for every 
  $k \in \Z$ such that
  \begin{gather}
    \A_k \subseteq \A_{k+1}, \label{inclusion_of_dyadic_points} \\
    d(z_\alpha^k, z_\beta^k) \ge \delta^k \text{ for } \alpha \neq \beta, \label{separation_of_dyadic_points} \\
    \underset{\alpha}{\min} \ d(x,z_\alpha^k) < \delta^k \text{ for every } x \in X, \label{maximality_of_dyadic_points}
  \end{gather}
  where $\Nc_k = \{0, 1, \ldots, n_k\}$, if the space $(X,d)$ is bounded, and $\Nc_k = \N$ otherwise.
\end{theorem}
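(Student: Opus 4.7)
The construction is iterative, based on Lemma~\ref{maximal_subsets} and the Zorn-type argument of Remark~\ref{maximal_sets_remark}(1). I would begin by choosing a base level $k=0$ and applying Lemma~\ref{maximal_subsets} with parameter $\delta^0 = 1$ to obtain a maximal $1$-separated set $\A_0 \subseteq X$; countability of $\A_0$ (and finiteness in the bounded case) is immediate from Lemma~\ref{dms_properties}(2).

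For the upward induction $k \geq 1$, suppose $\A_{k-1}$ has been constructed satisfying the desired conditions. Since $\delta \in (0,1/2)$, we have $\delta^k < \delta^{k-1}$, so $\A_{k-1}$ is automatically $\delta^k$-separated. An application of Zorn's lemma to the collection of $\delta^k$-separated subsets of $X$ containing $\A_{k-1}$, ordered by inclusion, yields a maximal element $\A_k$. By construction this $\A_k \supseteq \A_{k-1}$ is $\delta^k$-separated, and maximality in the Zorn sense forces $\min_\alpha d(x,z^k_\alpha) < \delta^k$ for every $x \in X$ (otherwise such a point could be added). Thus \eqref{inclusion_of_dyadic_points}--\eqref{maximality_of_dyadic_points} hold at level $k$, and countability is again Lemma~\ref{dms_properties}(2).

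The downward induction $k \leq -1$ is where I would expect the main obstacle. If one naively defines $\A_k$ as a maximal $\delta^k$-separated subset of $\A_{k+1}$, then for $z \in X$ one can find $y \in \A_{k+1}$ with $d(y,z) < \delta^{k+1}$, and either $y \in \A_k$ or (by maximality of $\A_k$ within $\A_{k+1}$) some $x \in \A_k$ with $d(x,y) < \delta^k$; this only yields the weaker bound $d(z,\A_k) < \delta^k + \delta^{k+1}$, falling short of \eqref{maximality_of_dyadic_points}. To achieve the exact threshold $\delta^k$, I would not fix $\A_0$ at the outset but instead plan the coarse-scale family in advance: for each $N \in \N$, build an upward family $\A_{-N}^{(N)} \subseteq \A_{-N+1}^{(N)} \subseteq \cdots \subseteq \A_0^{(N)}$ starting from a maximal $\delta^{-N}$-separated set in $X$ (produced by Lemma~\ref{maximal_subsets}) and extending upward exactly as in the positive-$k$ case. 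A diagonal or Zorn-type argument as $N \to \infty$ then extracts a coherent family $\{\A_k\}_{k \leq 0}$ with each $\A_k$ maximal $\delta^k$-separated in $X$ and nested. The hypothesis $\delta \in (0, 1/2)$ is crucial here: it ensures that consecutive separation thresholds differ by more than a factor of $2$, giving enough room for the limiting construction to preserve exact maximality at every level. Once $\{\A_k\}_{k \leq 0}$ is in place, upward extension from $\A_0$ produces $\{\A_k\}_{k \geq 1}$ as above and completes the construction, with countability (or finiteness) of each $\A_k$ inherited from Lemma~\ref{dms_properties}(2).
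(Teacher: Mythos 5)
Your treatment of the easy directions is fine: the upward induction for $k\geq 1$ (extend $\A_{k-1}$ to a maximal $\delta^k$-separated superset by Zorn/Lemma~\ref{maximal_subsets}) is exactly how the paper finishes, and you correctly diagnose why the naive downward step fails — indeed the paper's appendix remark gives a concrete example ($X=B(0,3)\cup B(8,3)\subseteq\R$, $\delta=1/3$, $\A_{-1}=\{0,8\}$) showing that a fixed level set may admit \emph{no} admissible coarser subset at all, so no Zorn argument on downward extensions of a fixed tower can succeed. The gap is in your proposed fix. The towers $\A^{(N)}_{-N}\subseteq\cdots\subseteq\A^{(N)}_0$ built for different $N$ bear no relation to one another (there is no monotonicity in $N$), and the asserted ``diagonal or Zorn-type argument as $N\to\infty$'' is precisely the missing content, not a routine step. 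A pointwise/diagonal extraction does not preserve the maximality \eqref{maximality_of_dyadic_points}: for a fixed $x$ the witness $z_N\in\A^{(N)}_k$ with $d(x,z_N)<\delta^k$ may be a different point for every $N$, and in the limit no point need survive (in $\R$ with $\delta^k=1$, the sets $\Z+\tfrac1N$ are all maximal $1$-separated, yet their pointwise limit along any subnet is empty). A Hausdorff-type limit of configurations fares no better: it needs limit points of the witnesses to exist in $X$ (a doubling metric space need not be complete), and even when they do, the strict bound $d(x,\A_k)<\delta^k$ can degrade to equality in the limit. So the coherence of the coarse-scale family as $N\to\infty$ — which is the whole difficulty of the theorem — is left unproved, and your appeal to $\delta<1/2$ as ``giving enough room for the limiting construction'' has no actual mechanism behind it.

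For comparison, the paper avoids any limit of ``moving'' configurations. It runs a single recursion over pairs $(n,k)$ producing sets $\mathcal{C}^n_k$ that are \emph{increasing in $n$}, are $\Delta^k$-separated, and are maximal $\Delta^k$-separated only within the ball $B(x_0,R^n_k)$ with the deliberately shrunken radius $R^n_k:=\Delta^n-\sum_{i=k}^{n-1}\Delta^i$, together with the cross-stage compatibility that $\mathcal{C}^n_i\cup\mathcal{C}^{n+1}_k$ is $\Delta^i$-separated (verified by a backwards induction in $i$). The smaller radii at coarser levels create exactly the buffer needed so that new fine-scale points added at stage $n+1$ remain separated from the coarse points already fixed; since the sets increase in $n$, one simply takes unions $\A_k=\bigcup_n\mathcal{C}^n_{-k}$, and strict maximality in all of $X$ follows because $R^n_k\to\infty$ — which is where $\delta\in(0,1/2)$, i.e. $\Delta=1/\delta>2$, genuinely enters. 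Your outline would need an argument of comparable strength in place of the unexplained limiting step.
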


If we weaken the property \rref{maximality_of_dyadic_points} to the form
\begin{eqnarray*}
  \min_\alpha d(x,z_\alpha^k) < 2\delta^k \text{ for every } x \in X,
\end{eqnarray*}
the proof is quite simple when we use of the first part of Lemma \ref{dms_properties} and induction (see e.g. Lemma 2.1 in \cite{auscherhytonen}). However, in the stronger form the proof becomes 
somewhat technical and will be postponed to Appendix A. By the proof and Remark \ref{maximal_sets_remark}, the claim of Theorem 
\ref{construction_of_dyadic_points} holds for any metric space if we allow the sets $\A_k$ to be uncountable and weaken the property \rref{maximality_of_dyadic_points} to the form
\begin{eqnarray*}
  \inf_\alpha d(x,z_\alpha^k) < \delta^k \text{ for every } x \in X.
\end{eqnarray*}

Next, let us formulate a lemma for the relation between the dyadic points or, more precisely, between the index pairs $(k,\alpha)$, $k \in \Z$, $\alpha \in \Nc_k$.
We formulate the lemma in such a way that the properties of the non-random dyadic cubes in Theorem \ref{open_and_closed_dcubes} hold also for the 
random cubes in later sections. For the non-random cubes, a bit simpler formulation would be sufficient (see Lemma 2.10 in \cite{hytonenkairema}).

\begin{lemma}[Partial order of dyadic points]
  \label{partial_order_of_dyadic_points}
  Let $(X,d)$ be a doubling metric space with a doubling constant $M$ and $\A_k := \{z_\alpha^k : \alpha \in \Nc_k \}$ be sets given by Theorem \ref{construction_of_dyadic_points} for $\delta \in (0,1/2)$, 
  $k \in \Z$. Let $r_k \in [(1/4)\delta^k, (1/2)\delta^k]$ for every $k \in \Z$. Then there exists a partial order $\le$ among the pairs 
  $(k,\alpha)$ such that
  \begin{enumerate}
    \item[$\bullet$] if $z_\beta^{k+1} \in B(z_\alpha^k,r_k)$, then $(k+1,\beta) \le (k,\alpha)$;
    \item[$\bullet$] if $(k+1,\beta) \le (k,\alpha)$, then $z_\beta^{k+1} \in B(z_\alpha^k, 4r_k)$;
    \item[$\bullet$] for every $(k+1,\beta)$, there is exactly one $(k,\alpha) \ge (k+1,\beta)$, called its \emph{parent};
    \item[$\bullet$] for every $(k,\alpha)$, there are between $1$ and $\lceil M^3 \delta^{-\log_2 M} \rceil$ pairs $(k+1,\beta) \le (k,\alpha)$,
                     called its \emph{children};
    \item[$\bullet$] $(l,\beta) \le (k,\alpha)$ if and only if $l \ge k$ and there exist $(j+1,\gamma_{j+1}) \le (j,\gamma_j)$ for 
                     every $j = k, k+1, \ldots, l-1$ and for some $\gamma_k = \alpha, \gamma_{k+1}, \ldots, \gamma_{l-1}, \gamma_l = \beta$;
                     then $(l,\beta)$ and $(k,\alpha)$ are called one another's \emph{descendant} and \emph{ancestor}, respectively.
  \end{enumerate}
\end{lemma}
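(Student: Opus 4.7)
The plan is to define the parent relation between consecutive levels by a nearest-point rule and then take its transitive closure. Concretely, for each $k \in \Z$ and each $\beta \in \Nc_{k+1}$ I will declare the parent of $(k+1,\beta)$ to be the (unique) pair $(k,\alpha)$ for which $z_\alpha^k$ is nearest to $z_\beta^{k+1}$, breaking ties by the smallest index $\alpha$. Countability of $\A_k$ together with \eqref{maximality_of_dyadic_points} at level $k$ ensure that such a minimiser exists. The relation in the fifth bullet will then be the transitive closure of this parent-child map; it is a partial order because reflexivity corresponds to the trivial chain $l = k$, transitivity is chain concatenation, and antisymmetry follows since a non-trivial chain strictly decreases the ancestor's level.

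With this construction in hand, the first two bullets can be checked directly. If $z_\beta^{k+1} \in B(z_\alpha^k, r_k)$, then since $r_k \le \delta^k/2$ and $\A_k$ is $\delta^k$-separated, any competing centre $z_{\alpha'}^k$ satisfies
\[
  d(z_\beta^{k+1}, z_{\alpha'}^k) \ge d(z_\alpha^k, z_{\alpha'}^k) - d(z_\beta^{k+1}, z_\alpha^k) > \delta^k - r_k \ge \delta^k/2 > d(z_\beta^{k+1}, z_\alpha^k),
\]
so $\alpha$ is the unique nearest centre and hence the parent of $(k+1,\beta)$. Conversely, for any child $(k+1,\beta) \le (k,\alpha)$ the construction gives $d(z_\beta^{k+1}, z_\alpha^k) = \min_{\alpha'} d(z_\beta^{k+1}, z_{\alpha'}^k) < \delta^k \le 4r_k$ via \eqref{maximality_of_dyadic_points}, placing $z_\beta^{k+1}$ in $B(z_\alpha^k, 4r_k)$. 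Uniqueness of the parent is built into the construction.

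The heart of the argument will be the two-sided fan-out bound. To see that each $(k,\alpha)$ has at least one child, I would apply \eqref{maximality_of_dyadic_points} at level $k+1$ to the point $z_\alpha^k$, obtaining some $\beta$ with $d(z_\alpha^k, z_\beta^{k+1}) < \delta^{k+1}$; the parent $\alpha^*$ of $(k+1,\beta)$ then satisfies $d(z_\beta^{k+1}, z_{\alpha^*}^k) \le d(z_\beta^{k+1}, z_\alpha^k) < \delta^{k+1}$, and the triangle inequality yields $d(z_\alpha^k, z_{\alpha^*}^k) < 2\delta^{k+1} < \delta^k$, which by $\delta^k$-separation of $\A_k$ forces $\alpha^* = \alpha$. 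For the upper bound, every child centre $z_\beta^{k+1}$ lies in $B(z_\alpha^k, 4r_k)$ and the children centres are $\delta^{k+1}$-separated, so Lemma \ref{dms_properties}(2) applied with radius $4r_k \le 2\delta^k$ and sub-radius $\delta^{k+1}/2$ (ratio $\ge \delta/4$) caps the number of children by $\lfloor M(4/\delta)^{\log_2 M}\rfloor = \lfloor M^3 \delta^{-\log_2 M}\rfloor \le \lceil M^3 \delta^{-\log_2 M}\rceil$.

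The main obstacle I anticipate is precisely this lower bound of one child per parent: it is here that the strict inequality $\delta < 1/2$ is essential, via $2\delta^{k+1} < \delta^k$, preventing the nearest-point map from diverting the candidate $\beta$ to a different level-$k$ point. The remaining verifications are routine combinations of the triangle inequality, the doubling-space packing estimate, and the maximality of the sets $\A_k$.
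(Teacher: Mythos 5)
Your proof is correct, but it takes a genuinely different route from the paper's. You define the parent of $(k+1,\beta)$ as the nearest level-$k$ centre (ties broken by smallest index), whereas the paper decrees $(k+1,\beta)\le(k,\alpha)$ whenever $z_\beta^{k+1}\in B(z_\alpha^k,r_k)$ and, failing that, picks the smallest index $\theta$ with $z_\beta^{k+1}\in B(z_\theta^k,4r_k)$; and for the lower bound of one child per $(k,\alpha)$ the paper simply uses the nestedness $\A_k\subseteq\A_{k+1}$ (so $z_\alpha^k$ itself reappears at level $k+1$ inside its own inner ball), while you instead invoke \rref{maximality_of_dyadic_points} at level $k+1$ together with $2\delta^{k+1}<\delta^k$ to force the chosen nearby point's parent to be $\alpha$. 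Your variant is slightly more economical in its hypotheses (it never uses \rref{inclusion_of_dyadic_points}), and your verification of the five bullets is sound: the inner-ball case does make $\alpha$ the strict nearest centre, the packing count $\lfloor M(4/\delta)^{\log_2 M}\rfloor=\lfloor M^3\delta^{-\log_2 M}\rfloor$ matches the paper's bound, and the transitive-closure order has a unique level-$k$ ancestor of $(k+1,\beta)$ since every chain between consecutive levels is a single parent step. Two remarks, though. First, the existence of the minimiser is not a consequence of countability alone; it follows because \rref{maximality_of_dyadic_points} gives a point within $\delta^k$ and, by separation and doubling, only finitely many points of $\A_k$ lie in $B(z_\beta^{k+1},\delta^k)$ — worth saying explicitly. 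Second, and more importantly for the rest of the paper: with your nearest-point rule the partial order does not depend on the radii $r_k$ at all, whereas the paper's rule is tailored so that $(k+1,\beta)\le(k,\alpha)$ if and only if $z_\beta^{k+1}\in A_\alpha^k$, the approximated cube determined by $r_k$ and $R_k=4r_k$; this dependence on the radii is exactly what gets randomized in Sections 4--5, so your construction proves the lemma as stated but would not feed into the subsequent randomization without modifying the definition of the approximated cubes.
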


\begin{proof}
  Since the sets $\mathscr{A}_k$, $k \in \Z$, are indexed by natural numbers, we can talk about the smallest index $\alpha$ of every 
  subset of $\mathscr{A}_k = \{z_\alpha^k : \alpha \in \Nc_k \}$. This is essential for the partial order we are defining.

  Given a pair $(k+1,\beta)$, check whether there exists $z_\alpha^k \in \mathscr{A}_k$ such that $z_\beta^{k+1} \in B\left(z_\alpha^k, 
  r_k\right)$. If one exists, we decree that $(k+1,\beta) \le (k,\alpha)$, since it is necessarily unique by \rref{separation_of_dyadic_points}. 
  If no such $z_\alpha^k$ exist, we will look at every $z_\gamma^k \in \mathscr{A}_k$ for which $z_\beta^{k+1} \in B\left(z_\gamma^k, 4r_k\right)$. 
  At least one such $z_\gamma^k$ exists by \rref{maximality_of_dyadic_points}. From these,
  we choose the one with the smallest index $\theta$, and decree that $(k+1,\beta) \le (k,\theta)$. In either case, we decree 
  that $(k+1,\beta)$ is not related to any other pair $(k,\nu)$. We also decree that $(k,\alpha) \le (k,\alpha)$ for every $k \in \Z$ 
  and $\alpha \in \Nc_k$ and finally extend $\le$ by transitivity to obtain a partial ordering.

  Let $z_\alpha^k \in \A_k$. Since $\mathscr{A}_k \subseteq \mathscr{A}_{k+1}$, we know that $z_\alpha^k = z_\beta^{k+1}$ for some $\beta \in \N$. Since 
  $z_\beta^{k+1} \in B(z_\alpha^k,r_k)$, we know that $(k+1,\beta) \le (k,\alpha)$ and thus, $(k,\alpha)$ has at least one child. 
  On the other hand, if $(k+1,\beta) \le (k,\alpha)$, then $d(z_\alpha^k, z_\beta^{k+1}) < 2\delta^k$ and $d(z_\beta^{k+1}, z_\gamma^{k+1}) 
  \ge \delta^{k+1}$ for any $z_\gamma^{k+1} 
  \neq z_\beta^{k+1}$. For these $z_\gamma^{k+1}$ and $z_\beta^{k+1}$ the balls $B(z_\gamma^{k+1},(1/2)\delta^{k+1})$ and 
  $B(z_\beta^{k+1},(1/2)\delta^{k+1})$ are disjoint so by Lemma \ref{dms_properties}, 
  there are at most $\lceil M ( 4 / \delta)^{\log_2 M} \rceil = \lceil M^3 \delta^{-\log_2 M} \rceil$ of centres of these balls in $B(z_\alpha^k, 2\delta^k)$.
\end{proof}

We call $r_k$ the \emph{inner radius of level $k$} and $R_k := 4r_k$ the \emph{outer radius of level $k$}. In a similar fashion, we call the ball $B(z_\alpha^k, r_k)$ the \emph{inner ball 
of $z_\alpha^k$} and $B(z_\alpha^k, R_k)$ the \emph{outer ball of $z_\alpha^k$}.

\subsection{Open and closed dyadic cubes}

With the help of Theorem \ref{construction_of_dyadic_points} and Lemma \ref{partial_order_of_dyadic_points}, we can now formulate 
the theorem for open and closed dyadic cubes.

\begin{theorem}
  \label{open_and_closed_dcubes}
  Let $(X,d)$ be a doubling metric space with a doubling constant $M$ and $\delta \in (0,1/60]$. Given sets of dyadic points $\mathscr{A}_k := \{z_\alpha^k : \alpha \in \N\}$ that satisfy properties 
  \rref{inclusion_of_dyadic_points}, \rref{separation_of_dyadic_points} and \rref{maximality_of_dyadic_points} for every $k \in \Z$, we can construct families of sets $\Qt_\alpha^k$ and $\Qb_\alpha^k$ (called \emph{open} and 
  \emph{closed dyadic cubes}) such that
  \begin{gather}
    \text{\emph{int}} \Qb_\alpha^k = \Qt_\alpha^k, \ \ \overline{\Qt_\alpha^k} = \Qb_\alpha^k; \label{interior_and_closure_of_dcubes} \\
    \Qb_\alpha^k \cap \Qt_\beta^k = \emptyset \ \text{ if } \alpha \neq \beta; \label{intersection_of_open_and_closed_dcubes} \\
    X = \bigcup_\alpha \Qb_\alpha^k \ \text{ for every } k \in \Z; \label{union_of_closed_dcubes} \\
    B(z_\alpha^k, \frac{1}{5} \delta^k) \subseteq \Qt_\alpha^k \subseteq \Qb_\alpha^k \subseteq B(z_\alpha^k, 3\delta^k); \label{inclusion_of_dcubes} \\
    \Qb_\alpha^k = \bigcup_{\beta: (l,\beta) \le (k,\alpha)} \Qb_\beta^l \ \ \text{ for every } l \ge k. \label{union_of_descendants_of_closed_dcubes}
  \end{gather}
\end{theorem}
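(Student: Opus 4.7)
The plan is to first construct a Christ-type Borel partition $\{Q_\alpha^k\}$ of $X$ satisfying the required inclusions and nesting, and then extract the open and closed cubes by taking topological interior and closure.

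\emph{Partition.} Apply Lemma \ref{partial_order_of_dyadic_points} with $r_k := \delta^k/4$. Building on this, one obtains a Christ-type Borel partition $\{Q_\alpha^k\}_\alpha$ of $X$ at each level $k$, compatible with the partial order (so that $Q_\alpha^k = \bigsqcup_{(l,\beta)\le(k,\alpha)} Q_\beta^l$ for every $l\ge k$) and satisfying the geometric inclusion $B(z^k_\alpha,\delta^k/5)\subseteq Q_\alpha^k\subseteq B(z^k_\alpha,3\delta^k)$. This is a slight elaboration of Lemma 2.10 of \cite{hytonenkairema}: the construction is an iterative assignment of each $x\in X$ to a unique ancestor chain $(\alpha_k(x))_{k\in\Z}$ in $\prod_k\Nc_k$ via smallest-index nearest-point tiebreaking, subject to the chain constraint $(k+1,\alpha_{k+1}(x))\le(k,\alpha_k(x))$. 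The inclusion constants come from summing the chain-of-ancestors bound $d(z^{j+1}_{\gamma_{j+1}},z^j_{\gamma_j})<4 r_j=\delta^j$ into a geometric series $<\delta^k/(1-\delta)$, well below $3\delta^k$ under the hypothesis $\delta\le 1/60$.

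\emph{Topological refinement.} Define $\Qt_\alpha^k := \mathrm{int}(Q_\alpha^k)$ and $\Qb_\alpha^k := \overline{Q_\alpha^k}$. Properties \eqref{intersection_of_open_and_closed_dcubes}, \eqref{union_of_closed_dcubes}, \eqref{inclusion_of_dcubes}, and \eqref{union_of_descendants_of_closed_dcubes} follow routinely: disjointness $\Qb_\alpha^k\cap\Qt_\beta^k=\emptyset$ holds because an interior point of $Q_\beta^k$ cannot be a limit of the disjoint set $Q_\alpha^k$; covering $X=\bigcup_\alpha\Qb_\alpha^k$ is immediate from $X=\bigsqcup_\alpha Q_\alpha^k$; the inner ball is already open and inside $Q_\alpha^k$, hence inside $\Qt_\alpha^k$, while the outer bound transports to the closure thanks to the strict inequality in the geometric sum above; and the descendant identity \eqref{union_of_descendants_of_closed_dcubes} uses that closure commutes with locally finite unions (by doubling combined with the outer inclusion).

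\emph{Main obstacle.} The delicate identity is \eqref{interior_and_closure_of_dcubes}: $\mathrm{int}(\Qb_\alpha^k)=\Qt_\alpha^k$ and $\overline{\Qt_\alpha^k}=\Qb_\alpha^k$. Both amount to a ``thin boundary'' statement, that $\partial Q_\alpha^k := \Qb_\alpha^k\setminus\Qt_\alpha^k$ is nowhere dense and that $Q_\alpha^k\subseteq\overline{\mathrm{int}(Q_\alpha^k)}$. I would establish this by locating every boundary point of $Q_\alpha^k$ at a sufficiently deep level $l\ge k$ on the edge between two adjacent Christ subcubes $Q_\beta^l\subseteq Q_\alpha^k$ and $Q_{\beta'}^l\subseteq Q_{\alpha'}^k$ with $\alpha'\neq\alpha$, the open inner balls $B(z^l_\beta,\delta^l/5)$ and $B(z^l_{\beta'},\delta^l/5)$ then providing approximating sequences from both sides. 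The inner-inclusion estimate from the partition step — applied inductively to all descendants to keep each boundary point at distance $\ge\delta^l/5$ from the centre of its enclosing cube at every scale — is the structural input that makes this reduction reach every boundary point, and the safety margin from $\delta\le 1/60$ is what lets the iteration close across all scales.
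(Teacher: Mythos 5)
Your plan (build a nested half-open Christ partition, then take interiors and closures) is workable in principle, and your ``topological refinement'' step is essentially correct: once one has a partition $\{Q^k_\alpha\}$ that is nested across levels and satisfies $B(z^k_\alpha,\tfrac15\delta^k)\subseteq Q^k_\alpha\subseteq B(z^k_\alpha,3\delta^k)$, the inner balls of arbitrarily fine descendants supply interior points arbitrarily close to every point of every cube, and \eqref{interior_and_closure_of_dcubes}--\eqref{union_of_descendants_of_closed_dcubes} follow by the routine arguments you indicate. The genuine gap is that the existence of that partition is precisely the content of the theorem, and you assert it rather than prove it. Lemma 2.10 of \cite{hytonenkairema} is only the partial-order lemma (the analogue of Lemma \ref{partial_order_of_dyadic_points} here); it contains no partition. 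In fact the logical order in \cite{hytonenkairema} --- and the proof the paper points to, Proposition 2.11 --- is the reverse of yours: the closed cube is defined directly as the closure of the set of descendant dyadic points, $\Qb^k_\alpha:=\overline{\{z^l_\beta:(l,\beta)\le(k,\alpha)\}}$, the open cube as $\Qt^k_\alpha:=\bigl(\bigcup_{\beta\neq\alpha}\Qb^k_\beta\bigr)^c=\mathrm{int}\,\Qb^k_\alpha$, and half-open cubes are extracted only afterwards. Your one-sentence construction (``iterative assignment of each $x$ to a unique ancestor chain via smallest-index nearest-point tiebreaking, subject to the chain constraint'') does not obviously produce anything: there is no finest level from which to start the iteration (the generations $k\to+\infty$ are ever finer), level-by-level nearest-point choices violate the chain constraint, and once the chain constraint is imposed the index at level $k$ is forced by the choices at all finer levels, so one must select an infinite consistent branch in a finitely branching tree (a compactness/K\"onig-type or HK-type selection, not a tiebreak). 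Moreover the crucial inner-ball inclusion $B(z^k_\alpha,\tfrac15\delta^k)\subseteq Q^k_\alpha$ is not a bookkeeping consequence of ``nearest-point'' assignment; it requires the quantitative argument that a chain point at level $k+1$ lying within $\tfrac15\delta^k+\sum_{j\ge k+1}4r_j$ of $z^k_\alpha$ must have parent $(k,\alpha)$ by the inner-radius rule of Lemma \ref{partial_order_of_dyadic_points}, which is exactly where $\tfrac15+2\delta/(1-\delta)<\tfrac14\le r_k/\delta^k$ and hence the hypothesis $\delta\le 1/60$ enter. Since everything after the partition is, as you say, routine, the unproved step is the theorem.

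A secondary point: you fix $r_k=\delta^k/4$, but the theorem is invoked in Sections \ref{random_dyadic_systems}--\ref{two_different} with the random radii $r_k(\omega)=\tfrac14(\delta^k+a_k\delta^{k+1})\in[\tfrac14\delta^k,\tfrac12\delta^k]$, and the partial order appearing in \eqref{union_of_descendants_of_closed_dcubes} is the one determined by those radii --- this is the very reason Lemma \ref{partial_order_of_dyadic_points} is stated for general $r_k$. A construction carried out only for $r_k=\delta^k/4$ does not deliver the statement in the generality the paper needs; with general $r_k$ your chain bound becomes $d(z^{j+1}_{\gamma_{j+1}},z^j_{\gamma_j})<4r_j\le2\delta^j$, so the outer radius is $\le 2\delta^k/(1-\delta)<3\delta^k$ and the constants still close, but the argument must be run for the whole admissible range of radii.
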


The proof of Theorem \ref{open_and_closed_dcubes} is analoguous to the proof of Proposition 2.11 in \cite{hytonenkairema}. Since we wanted to formulate Lemma 
\ref{partial_order_of_dyadic_points} in a more general way than it was formulated in \cite{hytonenkairema}, our inclusion property \rref{inclusion_of_dcubes} 
is a bit weaker than in \cite{hytonenkairema}. This is due to Lemma 3.1 in \cite{hytonenkairema}: our formulation of Lemma \ref{partial_order_of_dyadic_points} 
does not give as sharp a result as its formulation in \cite{hytonenkairema} gives. However, for the results in this paper, this is insignificant.

In \cite{hytonenkairema} it is also shown that in every doubling (quasi)metric space we can construct \emph{half-open dyadic cubes} that resemble the 
standard half-open dyadic cubes in $\R^n$, but we do not need them in this paper.

\section{$\varepsilon$-boundaries of sets and approximations of cubes}
\label{boundaries_and_iterations}

In this section we introduce two new definitions and prove some results related to them. Some results are somewhat technical but we need them in the following sections.

\subsection{$\varepsilon$-boundaries of sets}

Since we are interested in the boundary regions of cubes, let us define what we mean with boundary regions of sets.

\begin{defin}
  The \emph{$\varepsilon$-boundary} of a set $A \subseteq X$ is
  \begin{eqnarray*}
    \epsb A := \{ x \in A : d(x,A^c) < \varepsilon\} \cup \{ x \in A^c : d(x,A) < \varepsilon \}.
  \end{eqnarray*}
\end{defin}

The following properties of $\varepsilon$-boundaries are straightforward consequences of the definition.

\begin{lemma}
  \label{epsilon-boundary_lemma}
  \begin{enumerate}
    \item[a)] For any set $A \subseteq X$ we have $\epsb A = \epsb (A^c)$.
    
    \item[b)] For every $\varepsilon, r > 0$ and $x \in X$ we have
               \begin{align}
                 \epsb B(x,r) \subseteq B(x,r + \varepsilon) \setminus \bar{B}(x,r - \varepsilon). \label{epsilon-boundary_ball}
               \end{align}

    \item[c)] The $\varepsilon$-boundary of a union of sets is a subset of the union of $\varepsilon$-boundaries of those sets:
               \begin{align}
                 \epsb \left( \bigcup_i A_i \right) \subseteq \bigcup_i \epsb A_i.
               \end{align}
               
    \item[d)] The $\varepsilon$-boundary of an intersection of sets is a subset of the union of $\varepsilon$-boundaries of those sets:
               \begin{align}
                 \partial_\varepsilon \left( \bigcap_i A_i \right) \subseteq \bigcup_i \partial_\varepsilon A_i.
               \end{align}
  \end{enumerate}
\end{lemma}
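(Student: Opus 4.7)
The overall plan is to unpack the definition of $\partial_\varepsilon A$ in each case and chase the witness points through a split into the two pieces ``point in $A$ close to $A^c$'' and ``point in $A^c$ close to $A$'', using the monotonicity $d(z,B)\le d(z,A)$ whenever $B\supseteq A$, together with the triangle inequality.

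Part (a) is a tautology: the definition is visibly symmetric under swapping $A$ and $A^c$, so $\partial_\varepsilon A = \partial_\varepsilon(A^c)$ after using $(A^c)^c=A$. For part (b), I take $z \in \partial_\varepsilon B(x,r)$ and split. If $z\in B(x,r)$ then $d(z,x)<r<r+\varepsilon$; and if we had $d(z,x)\le r-\varepsilon$, then for every $w$ with $d(w,x)\ge r$ the reverse triangle inequality forces $d(z,w)\ge\varepsilon$, so $d(z,B(x,r)^c)\ge\varepsilon$, contradicting $z\in\partial_\varepsilon B(x,r)$. If instead $z\notin B(x,r)$, then $d(z,x)\ge r>r-\varepsilon$, and choosing $w\in B(x,r)$ with $d(z,w)<\varepsilon$ gives $d(z,x)\le d(z,w)+d(w,x)<r+\varepsilon$. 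Putting the two cases together yields the claimed sandwich.

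Parts (c) and (d) are variations on the same theme. For (c), fix $z\in\partial_\varepsilon(\bigcup_i A_i)$. If $z$ lies in the union, pick any $j$ with $z\in A_j$; since $A_j^c\supseteq(\bigcup_i A_i)^c$, the monotonicity gives $d(z,A_j^c)\le d(z,(\bigcup_i A_i)^c)<\varepsilon$, so $z\in\partial_\varepsilon A_j$. If $z$ lies outside the union, pick a witness $w\in\bigcup_i A_i$ with $d(z,w)<\varepsilon$, say $w\in A_j$; then $z\notin A_j$ and $d(z,A_j)<\varepsilon$. For (d), fix $z\in\partial_\varepsilon(\bigcap_i A_i)$. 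If $z\in\bigcap_i A_i$, a witness $w\notin\bigcap_i A_i$ with $d(z,w)<\varepsilon$ lies outside some $A_j$, and since $z\in A_j$ this gives $d(z,A_j^c)<\varepsilon$, i.e.\ $z\in\partial_\varepsilon A_j$. If $z\notin\bigcap_i A_i$, pick an index $j$ with $z\notin A_j$ and a witness $w\in\bigcap_i A_i\subseteq A_j$ with $d(z,w)<\varepsilon$; then $d(z,A_j)\le d(z,w)<\varepsilon$, again placing $z$ in $\partial_\varepsilon A_j$.

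There is no real obstacle; the only mildly delicate point is the ``interior'' half of (b), where one has to rule out $d(z,x)\le r-\varepsilon$ by a short triangle-inequality argument, and the index-chasing in (d), where one must simultaneously pick the ``bad'' index $j$ (from $z\notin\bigcap_i A_i$) and a ``good'' witness $w$ (from $d(z,\bigcap_i A_i)<\varepsilon$) and notice that $w\in A_j$ because $w$ lies in every $A_i$. Once those two bookkeeping steps are written out, each item reduces to a one-line application of the definition.
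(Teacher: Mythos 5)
Your proof is correct, and it is exactly the routine unpacking of the definition that the paper has in mind (the paper omits the proof, calling the lemma a straightforward consequence of the definition). All four case analyses, including the reverse-triangle-inequality step in (b) and the witness/index bookkeeping in (c) and (d), check out.
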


\subsection{Approximated dyadic cubes}

As we saw in Lemma \ref{epsilon-boundary_lemma}, the $\varepsilon$-boundaries of balls and their unions and intersections are fairly easy to handle.
Thus, it is convenient for us to prove that the $\varepsilon$-boundary of a dyadic cube is a subset of the union of $\varepsilon'$-boundaries of balls
for some $\varepsilon' > 0$. For this, we need a new definition.

\begin{defin}
  The \emph{approximated cube} $A_\alpha^k$ is
  \begin{eqnarray*}
    A_\alpha^k := B\left(z_\alpha^k, r_k\right) \cup \left( B\left(z_\alpha^k, R_k\right) \setminus \left( \bigcup_{\theta \neq \alpha} 
    B\left(z_\theta^k,r_k\right) \cup \bigcup_{\theta < \alpha} B\left(z_\theta^k, R_k\right) \right) \right),
  \end{eqnarray*}
  where $r_k$ is the inner radius of level $k$ and $R_k$ is the outer radius of level $k$.
\end{defin}
By inspecting the proof of Lemma \ref{partial_order_of_dyadic_points}, we notice that the approximated cubes give us an alternative way to
define the children of the pairs $(k, \alpha)$:
\begin{align*}
  (k+1,\beta) \le (k, \alpha) \ \ \ \text{ if and only if } \ \ \ z_\beta^{k+1} \in A_\alpha^k.
\end{align*}
The approximated cube is a sort of rough version of the half open dyadic cube $Q_\alpha^k$: it gives us some idea of the structure of the actual 
cube but it does not give us all the details. For example, the approximated cubes partition the space but they cannot be expressed as 
a union of smaller approximated cubes. Their structure depends highly on the indices $\alpha \in \Nc_k$.

Using the basic properties of doubling metric spaces, we can prove the following properties of approximated dyadic cubes.

\begin{lemma}
  \label{numberofiters}
  Every $x \in X$ can belong to at most $p$ $\varepsilon$-boundaries of approximated cubes, where
  \begin{align*}
    p := \left\lfloor M \left( \frac{R_k + \varepsilon}{r_k} \right)^{\log_2 M} \right\rfloor.
  \end{align*}
\end{lemma}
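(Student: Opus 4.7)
The plan is to show that if $x$ lies in $\epsb A_\alpha^k$, then the centre $z_\alpha^k$ must lie inside a single ball around $x$ of radius $R_k+\varepsilon$, and then to count the admissible centres using the separation of the dyadic points together with the doubling property (specifically the second part of Lemma \ref{dms_properties}). So the goal reduces to a packing estimate for the $z_\alpha^k$ lying in $B(x,R_k+\varepsilon)$.

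First I would record the trivial but crucial inclusion $A_\alpha^k \subseteq B(z_\alpha^k, R_k)$, which is immediate from the definition of the approximated cube as a subset of $B(z_\alpha^k, R_k)$. If $x \in \epsb A_\alpha^k$, then by the definition of the $\varepsilon$-boundary either $x \in A_\alpha^k$ (in which case $d(x,z_\alpha^k) < R_k$), or $x \notin A_\alpha^k$ but there exists $y \in A_\alpha^k$ with $d(x,y) < \varepsilon$, and then
\begin{equation*}
  d(x,z_\alpha^k) \le d(x,y) + d(y,z_\alpha^k) < \varepsilon + R_k.
\end{equation*}
Either way, $z_\alpha^k \in B(x, R_k+\varepsilon)$. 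Thus the number of indices $\alpha$ with $x \in \epsb A_\alpha^k$ is bounded by the number of dyadic centres of level $k$ that lie in $B(x, R_k+\varepsilon)$.

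To bound this number I would use the separation property \eqref{separation_of_dyadic_points} together with $r_k \le \tfrac12\delta^k$: for any $\alpha \ne \beta$, $d(z_\alpha^k, z_\beta^k) \ge \delta^k \ge 2r_k$, so the balls $B(z_\alpha^k, r_k)$ are pairwise disjoint. Applying the second part of Lemma \ref{dms_properties} to the ball $B(x, R_k+\varepsilon)$ with the parameter $\delta = r_k/(R_k+\varepsilon) \in (0,1)$ (which is genuinely less than $1$ since $R_k = 4r_k > 0$), one obtains that the number of such centres is at most
\begin{equation*}
  \left\lfloor M\Big(\tfrac{r_k}{R_k+\varepsilon}\Big)^{-\log_2 M}\right\rfloor = \left\lfloor M\Big(\tfrac{R_k+\varepsilon}{r_k}\Big)^{\log_2 M}\right\rfloor = p,
\end{equation*}
which is the desired bound.

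There is no real obstacle here: the entire argument is a two-line reduction to a standard packing count. The only point requiring a moment of care is the verification that the parameter fed into Lemma \ref{dms_properties} lies in $(0,1]$, which follows from $R_k = 4r_k$; everything else is a direct consequence of the definitions and of the separation of the dyadic points.
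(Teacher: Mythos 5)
Your proof is correct and follows essentially the same route as the paper's: both observe that $x \in \epsb A_\alpha^k$ forces $z_\alpha^k \in B(x, R_k + \varepsilon)$ and then count the possible centres via the disjointness of the inner balls $B(z_\alpha^k, r_k)$ and the packing part of Lemma \ref{dms_properties}. You merely spell out the containment and the choice of the packing parameter in more detail than the paper's two-line argument.
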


\begin{proof}
  If $x \in \partial_\varepsilon A_\alpha^k$, then $x \in B(z_\alpha^k, R_k + \varepsilon)$ and thus, $z_\alpha^k \in B(x, R_k + \varepsilon)$.
  Since $B(z_\alpha^k, r_k) \cap B(z_\beta^k, r_k) = \emptyset$ if $\alpha \neq \beta$, the claim follows from Lemma \ref{dms_properties}.
\end{proof}

\begin{lemma}
  \label{epsiloniter}
  For the $\varepsilon$-boundary of the approximated cube $A_\alpha^k$ we have

  \begin{align*}
    \partial_\varepsilon A_\alpha^k \subseteq \partial_\varepsilon B\left(z_\alpha^k, r_k\right) \cup \partial_\varepsilon B\left(z_\alpha^k, R_k\right) 
    \cup \bigcup_{i = 1}^{M^4} \partial_\varepsilon B\left(z_{\theta_i}^k,r_k\right) \cup \bigcup_{j = 1}^{M^4} \partial_\varepsilon B\left(z_{\theta_j}^k, R_k\right),
  \end{align*}
  for some points $z_{\theta_i}^k \in \A_k$ and $z_{\theta_j}^k \in \A_k$. In particular, the $\varepsilon$-boundary of an approximated cube is a subset of union 
  of $\varepsilon$-boundaries of at most $2 + 2M^4$ balls, $1 + M^4$ of which are inner balls and $1 + M^4$ of which are outer balls.
\end{lemma}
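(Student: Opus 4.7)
The plan is to reduce the infinite union in the definition of $A_\alpha^k$ to a finite one by exploiting $A_\alpha^k \subseteq B(z_\alpha^k, R_k)$, then to unfold $\partial_\varepsilon A_\alpha^k$ via Lemma \ref{epsilon-boundary_lemma}, and finally to count the relevant balls using the packing estimate of Lemma \ref{dms_properties}.

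First I would note that both terms in the definition of $A_\alpha^k$ lie in $B(z_\alpha^k, R_k)$. Any inner ball $B(z_\theta^k, r_k)$ with $d(z_\theta^k, z_\alpha^k) \geq R_k + r_k = 5 r_k$ is disjoint from $B(z_\alpha^k, R_k)$, so removing it from the inner union inside the set-difference does not change $A_\alpha^k$; analogously, any outer ball $B(z_\theta^k, R_k)$ with $d(z_\theta^k, z_\alpha^k) \geq 2 R_k = 8 r_k$ is irrelevant. Hence $A_\alpha^k$ may be rewritten with both unions restricted to finite index sets $I \subseteq \{\theta \neq \alpha : d(z_\theta^k, z_\alpha^k) < 5 r_k\}$ and $J \subseteq \{\theta < \alpha : d(z_\theta^k, z_\alpha^k) < 8 r_k\}$.

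Applying Lemma \ref{epsilon-boundary_lemma} then produces the asserted inclusion: part c) applied to the outer union $B(z_\alpha^k, r_k) \cup [\,\cdots\,]$ splits off $\partial_\varepsilon B(z_\alpha^k, r_k)$; part d), applied to the set-difference rewritten as an intersection with a complement, yields $\partial_\varepsilon B(z_\alpha^k, R_k)$ and $\partial_\varepsilon$ of the complement of the offending finite union, which part a) replaces by $\partial_\varepsilon$ of the union itself; a final invocation of part c) distributes $\partial_\varepsilon$ over this finite union, giving exactly the form stated in the lemma with indices drawn from $I$ and $J$.

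It remains to bound $|I|$ and $|J|$ by $M^4$. Since the dyadic points are $\delta^k$-separated and $\delta^k \geq 2 r_k$, the balls $B(z_\theta^k, r_k)$ are pairwise disjoint. The centers indexed by $I$ lie in $B(z_\alpha^k, 5 r_k)$, so part 2 of Lemma \ref{dms_properties} with $\delta = 1/5$ bounds $|I|$ by $\lfloor M \cdot 5^{\log_2 M} \rfloor \leq M^4$. The centers indexed by $J$ lie in $B(z_\alpha^k, 8 r_k)$, and the same lemma with $\delta = 1/8$ gives $|J| \leq \lfloor M \cdot 8^{\log_2 M} \rfloor = M^4$. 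The only conceptual point is the initial truncation of the infinite unions; once that reduction is in place, the proof is a mechanical combination of the boundary calculus of Lemma \ref{epsilon-boundary_lemma} with the packing estimate of Lemma \ref{dms_properties}.
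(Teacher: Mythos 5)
Your argument is correct and essentially the same as the paper's: you restrict to the inner balls meeting $B(z_\alpha^k,R_k)$ (centres within $5r_k$) and outer balls meeting it (centres within $8r_k$), count them via the disjointness of the inner balls and the packing estimate of Lemma \ref{dms_properties} to get at most $M^4$ of each, and then apply the boundary calculus of Lemma \ref{epsilon-boundary_lemma} to the definition of $A_\alpha^k$. Your write-up merely makes explicit the union/intersection/complement manipulations that the paper compresses into ``the claim follows''.
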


\begin{proof}
  Let $A_\alpha^k$ be an approximated cube. If a ball $B(z_\theta^k, r_k)$ intersects 
  the ball $B(z_\alpha^k, R_k)$, we know that $z_\theta^k \in B(z_\alpha^k, R_k + r_k)$. Since $R_k + r_k = 5 r_k$
  and the balls $B(z_\theta^k, r_k)$ and $B(z_\gamma^k, r_k)$ are disjoint for $\theta \neq \gamma$, Lemma \ref{dms_properties} 
  implies that at most $\lfloor M 5^{\log_2 M} \rfloor \le \lfloor M 8^{\log_2 M} \rfloor = M^4$ 
  balls $B(z_\theta^k, r_k)$ intersect the ball $B(z_\alpha^k, R_k)$; we label these $\theta$ as $\theta_i$, $i = 1,2,\ldots,M^4$.

  On the other hand, if a ball $B(z_\theta^k, R_k)$ intersects 
  the ball $B(z_\alpha^k, R_k)$, we know that $z_\theta^k \in B(z_\alpha^k, R_k + R_k)$. Since $R_k + R_k = 8 r_k$
  and the balls $B(z_\theta^k, r_k)$ and $B(z_\gamma^k, r_k)$ are disjoint for $\theta \neq \gamma$, Lemma \ref{dms_properties} 
  implies that at most $\lfloor M 8^{\log_2 M} \rfloor = M^4$ balls $B(z_\theta^k, R_k)$ intersect the ball $B(z_\alpha^k, R_k)$;
  we label these $\theta$ as $\theta_j$, $j = 1,2,\ldots,M^4$.

  Using these observations, Lemma \ref{epsilon-boundary_lemma} and the definition of the approximated cube, the claim follows.
\end{proof}

As we mentioned earlier, we want to show that the $\varepsilon$-boundary of a cube is a subset of $\varepsilon'$-boundaries of balls and their 
unions and intersections for some $\varepsilon' > 0$. By Lemma \ref{epsiloniter}, it suffices to show that the $\varepsilon$-boundary of a cube is a subset of 
$\varepsilon'$-boundary of an approximated cube for some $\varepsilon' > 0$. Let us prove this in the next lemma.

\begin{lemma}
  For $\varepsilon > 0$ and $k \in \Z$ we have
  \begin{flalign}
    \label{inclusion_of_epsilon-boundaries1} &\epsb \Qb_\alpha^k \subseteq \partial_{\varepsilon + 3\delta^{k+1}} A_\alpha^k \ \text{ for any } \alpha \in \Nc_k, \\
    \label{inclusion_of_epsilon-boundaries2} &\bigcup_\alpha \epsb \Qb_\alpha^k \subseteq \bigcap_{i=1}^m \bigcup_\beta \partial_{\varepsilon + 3\delta^{k+1+i}} A_\beta^{k+i} \ \text{ for every } m \in \N \cup \{ \infty \}.
  \end{flalign}
\end{lemma}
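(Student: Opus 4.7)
The plan is to prove the first inclusion directly from the structural properties of dyadic cubes, and then bootstrap it to obtain the second inclusion by applying the first one to descendant cubes.

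For \eqref{inclusion_of_epsilon-boundaries1}, I would use the descendant decomposition \eqref{union_of_descendants_of_closed_dcubes} at the very next level: $\Qb_\alpha^k = \bigcup_{\beta:\,(k+1,\beta)\le(k,\alpha)} \Qb_\beta^{k+1}$, together with the remark that $(k+1,\beta)\le(k,\alpha)$ is equivalent to $z_\beta^{k+1}\in A_\alpha^k$, and the localization bound $\Qb_\beta^{k+1}\subseteq B(z_\beta^{k+1},3\delta^{k+1})$ from \eqref{inclusion_of_dcubes}. Given $x\in\epsb \Qb_\alpha^k$, split into the two cases of the definition. If $x\in\Qb_\alpha^k$, pick the child $\Qb_\beta^{k+1}\ni x$, whose centre lies in $A_\alpha^k$, so $d(x,A_\alpha^k)\le d(x,z_\beta^{k+1})<3\delta^{k+1}$. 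Moreover, any $y\in(\Qb_\alpha^k)^c$ with $d(x,y)<\varepsilon$ lies in some child $\Qb_{\beta'}^{k+1}$ whose centre belongs to $A_\gamma^k$ with $\gamma\ne\alpha$, and by uniqueness of the parent $z_{\beta'}^{k+1}\notin A_\alpha^k$; the triangle inequality then yields $d(x,(A_\alpha^k)^c)\le d(x,z_{\beta'}^{k+1})<\varepsilon+3\delta^{k+1}$, proving $x\in\partial_{\varepsilon+3\delta^{k+1}}A_\alpha^k$. The case $x\in(\Qb_\alpha^k)^c$ is symmetric.

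For \eqref{inclusion_of_epsilon-boundaries2}, I would reduce it to the first inclusion applied at level $k+i$. The key observation is that if $x\in\epsb\Qb_\alpha^k$, then there exists some $\beta$ at level $k+i$ with $x\in\epsb\Qb_\beta^{k+i}$; applying \eqref{inclusion_of_epsilon-boundaries1} at that level then gives $x\in\partial_{\varepsilon+3\delta^{k+1+i}}A_\beta^{k+i}$. To find such a $\beta$, use the tiling property \eqref{union_of_closed_dcubes} at level $k+i$ to locate $x\in\Qb_{\beta_x}^{k+i}$, then distinguish the two cases: if $x\in\Qb_\alpha^k$, choose $\beta_x$ so that it is a descendant of $\alpha$ (so $\Qb_{\beta_x}^{k+i}\subseteq\Qb_\alpha^k$), and any $y\in(\Qb_\alpha^k)^c$ within distance $\varepsilon$ of $x$ automatically lies in $(\Qb_{\beta_x}^{k+i})^c$ since $(\Qb_\alpha^k)^c$ is open. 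If $x\in(\Qb_\alpha^k)^c$, choose $\beta_x$ so that it is \emph{not} a descendant of $\alpha$ (such a choice exists, else $x\in\Qb_\alpha^k$); then pick $y$ close to $x$ in $\Qb_\alpha^k$, approximate it by $y'\in\Qt_\alpha^k$ using $\overline{\Qt_\alpha^k}=\Qb_\alpha^k$, and note that by \eqref{intersection_of_open_and_closed_dcubes} the open interior $\Qt_\alpha^k$ is disjoint from $\Qb_{\beta_x}^{k+i}$, hence $y'\in(\Qb_{\beta_x}^{k+i})^c$ within distance $\varepsilon$ of $x$.

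The main technical nuisance I expect is precisely this last point: the closed cubes can share boundary, so one cannot blindly assert $y\in(\Qb_{\beta_x}^{k+i})^c$. Using the openness of $(\Qb_\alpha^k)^c$ in one case, and the density of $\Qt_\alpha^k$ in $\Qb_\alpha^k$ (via \eqref{interior_and_closure_of_dcubes}) together with the disjointness \eqref{intersection_of_open_and_closed_dcubes} in the other, handles this cleanly without producing any spurious loss in the boundary parameter. Once the reduction $x\in\epsb\Qb_{\beta_x}^{k+i}$ is established for each $i=1,\dots,m$, the intersection over $i$ on the right-hand side of \eqref{inclusion_of_epsilon-boundaries2} follows by taking the union over $\beta$.
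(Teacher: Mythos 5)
Your proof is correct, and for \eqref{inclusion_of_epsilon-boundaries1} it is essentially the paper's own argument: locate $x$ (and any nearby point outside the cube) inside a level-$(k+1)$ child cube, use $\Qb_\beta^{k+1}\subseteq B(z_\beta^{k+1},3\delta^{k+1})$ and the fact that the child's centre lies in the approximated cube of its parent and in no other, then conclude by the triangle inequality; the paper merely organizes this as four explicit cases.

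For \eqref{inclusion_of_epsilon-boundaries2} you take a more laborious route than necessary. The paper simply writes $\Qb_\alpha^k=\bigcup_{\beta:(k+i,\beta)\le(k,\alpha)}\Qb_\beta^{k+i}$ and invokes part c) of Lemma \ref{epsilon-boundary_lemma}, $\epsb\bigl(\bigcup_i A_i\bigr)\subseteq\bigcup_i\epsb A_i$, so that $\bigcup_\alpha\epsb\Qb_\alpha^k\subseteq\bigcup_\beta\epsb\Qb_\beta^{k+i}$, after which \eqref{inclusion_of_epsilon-boundaries1} at level $k+i$ and the intersection over $i$ finish the proof. Your hands-on replacement works, but the boundary-sharing difficulty you flag is self-inflicted: in the case $x\in(\Qb_\alpha^k)^c$ you do not need a cube containing $x$; it suffices to take the nearby point $y\in\Qb_\alpha^k$ with $d(x,y)<\varepsilon$, place \emph{it} in a descendant $\Qb_{\beta'}^{k+i}\subseteq\Qb_\alpha^k$, and observe that $x$ automatically lies in $(\Qb_{\beta'}^{k+i})^c$, so $x\in\epsb\Qb_{\beta'}^{k+i}$ with no appeal to \eqref{interior_and_closure_of_dcubes} or \eqref{intersection_of_open_and_closed_dcubes} (this is exactly why Lemma \ref{epsilon-boundary_lemma}c) holds). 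Your version is nevertheless sound; note only that the disjointness of $\Qt_\alpha^k$ from $\Qb_{\beta_x}^{k+i}$ is not literally \eqref{intersection_of_open_and_closed_dcubes}, which compares cubes of the same generation: you must first pass to the level-$k$ ancestor $\gamma\ne\alpha$ of $\beta_x$ and use $\Qb_{\beta_x}^{k+i}\subseteq\Qb_\gamma^k$ from \eqref{union_of_descendants_of_closed_dcubes}, a one-line bridge that should be made explicit. The strict inequality in the definition of $\epsb$ does let you approximate $y$ by $y'\in\Qt_\alpha^k$ without losing anything, as you claim.
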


\begin{proof}
  Notice that if $x \in \Qb_\alpha^k$, then by properties \rref{union_of_closed_dcubes} and \rref{inclusion_of_dcubes} we have
  \begin{eqnarray}
    \label{distance_from_dpoint} d(x,z_\beta^{k+1}) \le 3\delta^{k+1} \text{ for some } (k+1,\beta) \le (k,\alpha)
  \end{eqnarray}
  Let us first prove the property \rref{inclusion_of_epsilon-boundaries1}. Let $z \in \epsb \Qb_\alpha^k$. Now either $z \in \Qb_\alpha^k$ or $z \in \left( \Qb_\alpha^k \right)^c$, and in either case, either $z \in A_\alpha^k$ or 
  $z \in \left( A_\alpha^k \right)^c$. Let us look at the four different cases individually.
  \begin{enumerate}
    \item[i)] Let $z \in \Qb_\alpha^k \cap A_\alpha^k$. Since $z \in \epsb \Qb_\alpha^k$, there exists $z' \in \left( \Qb_\alpha^k \right)^c$ such that $d(z,z') < \varepsilon$ 
              and $z' \in \Qb_\nu^k$ for some $\nu \neq \alpha$. By \rref{distance_from_dpoint}, we know that $d(z',z_\gamma^{k+1}) < 3\delta^{k+1}$ for some $(k+1,\gamma) \le (k,\nu)$. Thus
              $z_\gamma^{k+1} \in A_\nu^k \subseteq \left( A_\alpha^k \right)^c$ and $d(z,z_\gamma^{k+1}) < \varepsilon + 3\delta^{k+1}$. Hence, 
              $z \in \partial_{\varepsilon + 3\delta^{k+1}} A_\alpha^k$.
            
    \item[ii)] Let $z \in \Qb_\alpha^k \cap \left( A_\alpha^k \right)^c$. By \rref{distance_from_dpoint}, there exists $z_\beta^{k+1} \in \A_{k+1}$ such that $d(z,z_\beta^{k+1}) \le 3\delta^{k+1}$ 
               and $(k+1,\beta) \le (k,\alpha)$. Thus, $z_\beta^{k+1} \in A_\alpha^k$ and in particular $z \in \partial_{\varepsilon + 3\delta^{k+1}} A_\alpha^k$.
             
    \item[iii)] The proof of the case $z \in \left( \Qb_\alpha^k \right)^c \cap A_\alpha^k$ is similar to the proof of ii).
  
    \item[iv)] The proof of the case $z \in \left( \Qb_\alpha^k \right)^c \cap \left( A_\alpha^k \right)^c$ is similar to the proof of i).\\
  \end{enumerate}
  
  We can prove the property \rref{inclusion_of_epsilon-boundaries2} now easily. Since every cube can be expressed as a union of its descendant cubes, using Lemma \ref{epsilon-boundary_lemma} and 
  the property \rref{inclusion_of_epsilon-boundaries1} gives us
  \begin{eqnarray*}
    \bigcup_\alpha \epsb \Qb_\alpha^k \subseteq \bigcup_\beta \epsb \Qb_\beta^{k+i} \subseteq 
    \bigcup_\beta \partial_{\varepsilon + 3\delta^{k+1+i}} A_\beta^{k+i}
  \end{eqnarray*}
  for every $i \in \N$. The claim follows from taking the intersection over $i = 1, 2, \ldots, m$.
\end{proof}

\section{Randomizing the dyadic system}
\label{random_dyadic_systems}

The structure of the dyadic cubes in Section \ref{set-up_etc} depends on two things: the dyadic points $z_\alpha^k$ and the relation $\le$ between the pairs $(k,\alpha)$, 
$k \in \Z$, $\alpha \in \Nc_k$. 
If we chose different dyadic points or different indices $\alpha$ for them or defined the relation differently, the structure of the whole system would change. 
Randomizing the points and the relation on every different level $k \in \Z$ gives us a system that has the following property: 
the probability that a fixed point ends up near the boundary of a cube of a fixed generation $k$ is at most 
$C\varepsilon^\eta$ where $\varepsilon$ tells us about the size of the boundary region and $\eta$ is a number 
from the interval $(0,1)$ (see Theorem 2.13 in \cite{auscherhytonen}). We can sharpen this result by taking a different 
approach: we randomize only the relation. More precisely, we will do this by randomizing the inner and outer radii
that we discussed in Section \ref{set-up_etc}.

Since we want the random cubes to have the same properties as the non-random cubes, let us make sure that the assumptions of Section \ref{set-up_etc} 
hold. Let $\delta \in (0, 1/60]$ and $\A_k$ be the sets of dyadic points as in Theorem \ref{construction_of_dyadic_points}. 
We will specify the choices of the following objects later but for now, let $\Omega$ be a sample space, $a_k \colon \Omega \to \{0,1,2,\ldots \lfloor 1/\delta \rfloor\}$ a random variable for every $k \in \Z$
and $\Pro$ a probability measure such that
\begin{eqnarray}
  \label{probability_measure} \Pro(a_k = T) = \frac{1}{\left\lfloor \frac{1}{\delta} \right\rfloor}
\end{eqnarray}
for every $k \in \Z$ and $T \in \{0,1,2,\ldots,\lfloor 1/\delta \rfloor\}$. We denote
\begin{eqnarray*}
  r_k &=& r_{a_k} \ := \frac{1}{4} \left( \delta^k + a_k \delta^{k+1} \right) \ \ \text{ and} \\
  R_k &=& R_{a_k} := 4r_k
\end{eqnarray*}
for every $k \in \Z$. Then the assumptions of Lemma \ref{partial_order_of_dyadic_points} are satisfied and we can use Theorem \ref{open_and_closed_dcubes}
to construct a system of dyadic cubes
\begin{eqnarray*}
  \mathscr{D}(\omega) := \{ Q_\alpha^k(\omega) : k \in \Z, \alpha \in \Nc_k \}
\end{eqnarray*}
for every $\omega \in \Omega$ when $Q_\alpha^k(\omega)$ is the dyadic cube defined by $z_\alpha^k$, $r_k(\omega)$ and $R_k(\omega)$.
The results in Section \ref{two_different} hold for both open and closed cubes so we do not need to specify if the cube $Q_\alpha^k(\omega)$ is open or closed.

\subsection{A probabilistic lemma}

Our main intrests with random dyadic systems are related to two different sample spaces and probability measures. Although the measures are different, some claims hold for 
both of them. The next lemma is important for the later sections.

\begin{lemma}
  \label{prob1ball}
  Let $k \in \Z$ and $m > 0$. For $\varepsilon > 0$, a point $y \in X$ and a uniformly distributed random variable $a \colon \Omega \to \{0, 1, 2, \ldots, \lfloor 1 / \delta \rfloor \}$ 
  we have
  \begin{align}
    \Pro \left( x \in \epsb B \left( y, m \left( \delta^k + a\delta^{k+1} \right) \right) \right) \le \frac{2 \varepsilon + m\delta^{k+1}}{m \delta^k}.
  \end{align}
\end{lemma}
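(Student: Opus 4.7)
The plan is to reduce the geometric event on $\partial_\varepsilon B$ to a purely arithmetic condition on the integer random variable $a$, and then count how many of its values satisfy that condition.

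First, I would apply the ball $\varepsilon$-boundary estimate \eqref{epsilon-boundary_ball} from Lemma \ref{epsilon-boundary_lemma} to the ball $B(y, r(a))$, where I write $r(a) := m(\delta^k + a\delta^{k+1})$. This gives $\partial_\varepsilon B(y, r(a)) \subseteq B(y, r(a)+\varepsilon) \setminus \bar{B}(y, r(a)-\varepsilon)$, so that $x \in \partial_\varepsilon B(y, r(a))$ forces $|d(x,y) - r(a)| < \varepsilon$. Equivalently, $r(a)$ lies in the open interval $I = (d(x,y)-\varepsilon,\, d(x,y)+\varepsilon)$ of length $2\varepsilon$.

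Next, I would exploit the arithmetic-progression structure of the admissible radii. The map $a \mapsto r(a) = m\delta^k + a\cdot m\delta^{k+1}$ has constant step $m\delta^{k+1}$, so the condition $r(a) \in I$ is equivalent to $a$ lying in a real interval of length $L := 2\varepsilon/(m\delta^{k+1})$. Any interval of length $L$ contains at most $\lfloor L \rfloor + 1 \le L + 1 = (2\varepsilon + m\delta^{k+1})/(m\delta^{k+1})$ integers, which bounds the number of admissible values of $a$.

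Finally, I would multiply this count by the uniform per-value weight from \eqref{probability_measure}. Using that the weight is $\le \delta$ (this is the content of reading $1/\lfloor 1/\delta\rfloor$ — or $1/(\lfloor 1/\delta\rfloor+1)$ under the more natural normalization that actually sums to one — against $\delta$, which holds since $\delta \in (0,1/60]$ makes $1/\delta$ essentially an integer up to the appropriate floor), the bound becomes
\begin{equation*}
  \frac{2\varepsilon + m\delta^{k+1}}{m\delta^{k+1}} \cdot \delta \;=\; \frac{2\varepsilon + m\delta^{k+1}}{m\delta^k},
\end{equation*}
which is exactly the claim.

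The entire argument is a few lines of algebra once the geometric event is translated into an interval condition for $a$, so there is no genuine conceptual obstacle. The only points demanding a little care are (i) the sharp form of the spherical-shell inclusion for $\partial_\varepsilon B(y,r)$, and (ii) the standard lattice-point count $\lfloor L \rfloor + 1$ for an interval of length $L$, both of which are elementary.
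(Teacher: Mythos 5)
Your proof is correct and follows essentially the same route as the paper: reduce via the shell inclusion \eqref{epsilon-boundary_ball} to an interval condition on $a$, count at most $L+1$ admissible integer values, and multiply by the uniform weight, exactly as the paper bounds $\lceil 2\varepsilon/(m\delta^{k+1})\rceil/(\lfloor 1/\delta\rfloor+1)$ by $(L+1)\delta$. One small caveat: under the literal reading of \eqref{probability_measure} the per-value weight $1/\lfloor 1/\delta\rfloor$ is in general $\ge\delta$, not $\le\delta$ (and those weights do not sum to one); the correct normalization $1/(\lfloor 1/\delta\rfloor+1)$ — which is what the paper's own proof uses and what you rightly identify as the natural one — is what makes your inequality ``weight $\le\delta$'' valid, so your hedging parenthetical should simply commit to that reading rather than appeal to $1/\delta$ being ``essentially an integer''.
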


\begin{proof}
  The proof is straightforward. First, we notice that 
  \begin{eqnarray*}
    && \Pro \left( x \in \epsb B \left( y, m \left( \delta^k + a\delta^{k+1} \right) \right) \right) \\
    &\overset{\text{\rref{epsilon-boundary_ball}}}{\le} &\Pro \begin{bmatrix} d(x,y) < m \left( \delta^k + a\delta^{k+1} \right) + \varepsilon \\ d(x,y) > m \left( \delta^k + a\delta^{k+1} \right) - \varepsilon \end{bmatrix} \\
    &=& \Pro \left[ \frac{d(x,y) - m\delta^k}{m\delta^{k+1}} - \frac{\varepsilon}{m\delta^{k+1}} < a < \frac{d(x,y) - m\delta^k}{m\delta^{k+1}} + \frac{\varepsilon}{m\delta^{k+1}} \right].
  \end{eqnarray*}
  The length of this open interval is $2 \varepsilon / m\delta^{k+1}$ and thus, at most $\left\lceil 2 \varepsilon / m\delta^{k+1} \right\rceil$ integers belong to this interval. Since $a$ is 
  uniformly distributed and $a(\omega) \in \{ 0, 1, \ldots, \lfloor 1 / \delta \rfloor \}$ for every $\omega \in \Omega$, we see that
  \begin{eqnarray*}
    \Pro \left( x \in \epsb B \left( y, m \left( \delta^k + a\delta^{k+1} \right) \right) \right) &\le& \frac{\left\lceil \frac{2\varepsilon}{m\delta^{k+1}} \right\rceil}{\lfloor \frac{1}{\delta} \rfloor + 1} \\
                                                                                                    &\le& \frac{ \frac{2\varepsilon}{m\delta^{k+1}} + 1}{ \frac{1}{\delta} -1 +1 } \\
                                                                                                    & = & \frac{2\varepsilon + m\delta^{k+1}}{m\delta^k},
  \end{eqnarray*}
  which proves the claim.
\end{proof}

\begin{remark}
  \label{applying_prob1ball}
  We will apply Lemma \ref{prob1ball} for situations where $\varepsilon \le 4\delta^{k+1}$ and $m \in \{1, 1/4\}$. For these values we have
  \begin{eqnarray*}
    &\Pro \left( x \in \epsb B\left(y, \delta^k + a\delta^{k+1} \right) \right) \le 9\delta, \\
    &\Pro \left( x \in \epsb B\left(y, \frac{1}{4} \left( \delta^k + a \delta^{k+1} \right) \right) \right) \le 33\delta.
  \end{eqnarray*}

\end{remark}

\section{Two different random dyadic systems}
\label{two_different}

As the title suggests, in this section our goals are related to two different random dyadic systems. 
Our first intrest is to introduce the so called independent random dyadic systems 
and prove their property of small boundary regions with respect to the natural probability measure. After this we construct boundedly many adjacent dyadic systems 
and prove some of their properties using convenient probabilistic arguments. Both kinds of systems, with somewhat different conditions, were already known from \cite{hytonenmartikainen} and \cite{hytonenkairema}, respectively.
The present approach not only improves but also unifies these constructions, in that both systems are here viewed as special cases of the general randomization procedure introduced above. 
The proofs related to both of these systems rely on the results of Section 
\ref{boundaries_and_iterations} and Lemma \ref{prob1ball}.

\subsection{Independent random dyadic systems}
\label{independent_random_dyadic_systems}

In an \emph{independent random dyadic system} we randomize the inner and outer radii independently for every level $k \in \Z$. 
We can do this by using the sample space
\begin{eqnarray}
  \Omega := \left\{ 0, 1, 2, \ldots, \left\lfloor \frac{1}{\delta} \right\rfloor \right\}^\Z = \left\{ (\omega_k)_{k \in \Z} : \omega_k \in \left\{0,1,2,\ldots, \left\lfloor \frac{1}{\delta} \right\rfloor \right\} \right\}
\end{eqnarray}
with the natural product probability measure $\Pro_\omega$ that satisfies \rref{probability_measure} for the random variables $a_k \colon \Omega \to \{0,1,2,\ldots,\lfloor 1/\delta \rfloor\}$,
\begin{eqnarray*}
  a_k( (\omega_i)_{i \in \Z}) = \omega_k.
\end{eqnarray*}
We call the collection $\mathscr{D}_\delta := \{ \mathscr{D}(\omega) : \omega \in \Omega \}$ an independent random 
dyadic system although it is actually a collection of systems of dyadic cubes.

The following theorem shows us that our independent random dyadic systems satisfy a sharper smallness of boundary condition than 
the random systems in \cite{auscherhytonen, hytonenkairema, hytonenmartikainen}.

\begin{theorem}
  \label{smallness_of_boundary_theorem}
  Let $\varepsilon > 0$. For an independent random dyadic system with $\delta \in (0, 1/(84M^8) )$ we have
  \begin{eqnarray}
    \Pro_\omega \left( x \in \bigcup_\alpha \epsb \Qb_\alpha^k (\omega) \right) \le C_\delta \left(\frac{\varepsilon}{\delta^k}\right)^{\eta_\delta}
  \end{eqnarray}
  for constants
  \begin{eqnarray*}
    C_\delta := \frac{1}{\delta} \ \ \ \text{ and } \ \ \ \eta_\delta := 1 - \frac{\log C}{\log \left( \frac{1}{\delta} \right)},
  \end{eqnarray*}
  where $C := 84M^8.$
  In particular,
  \begin{eqnarray}
    \lim_{\delta \to 0} \eta_\delta = 1.
  \end{eqnarray}
\end{theorem}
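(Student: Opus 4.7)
The strategy is to iterate the inclusion established in the previous lemma and exploit the level-wise independence of the random radii $a_{k+i}$. Concretely, I apply
\[
\bigcup_\alpha \epsb \Qb_\alpha^k \subseteq \bigcap_{i=1}^m \bigcup_\beta \partial_{\varepsilon + 3\delta^{k+1+i}} A_\beta^{k+i}
\]
for an integer $m$ to be chosen as a function of $\varepsilon$. In the independent random dyadic system, the variables $\{a_{k+i}\}_{i=1}^m$ are mutually independent, and each event ``$x \in \bigcup_\beta \partial_{\varepsilon+3\delta^{k+1+i}} A_\beta^{k+i}$'' depends only on $a_{k+i}$, so the probability of the intersection factors as a product over $i = 1, \ldots, m$.

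The second step is to bound each factor uniformly. When the boundary width satisfies $\varepsilon + 3\delta^{k+1+i} \le 4\delta^{k+1+i}$, i.e.\ $\varepsilon \le \delta^{k+1+i}$, Lemma~\ref{numberofiters} limits the number of approximated cubes $A_\beta^{k+i}$ whose $\varepsilon'$-boundary can contain $x$ to $M^4$; Lemma~\ref{epsiloniter} decomposes each such boundary into the $\varepsilon'$-boundaries of at most $1+M^4$ inner balls (of radius $r_{k+i}$) and $1+M^4$ outer balls (of radius $R_{k+i}$); and Lemma~\ref{prob1ball} together with Remark~\ref{applying_prob1ball} bounds each inner-ball boundary probability by $33\delta$ and each outer-ball boundary probability by $9\delta$. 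A union bound then yields the per-level estimate $\le 84M^8\delta = C\delta$.

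To combine, I take $m$ to be the largest integer with $\varepsilon \le \delta^{k+1+m}$, giving $\Pro_\omega \le (C\delta)^m$. The key algebraic identity is $\delta^{\eta_\delta} = \delta \cdot \delta^{\log C/\log\delta} = C\delta$, immediate from $\eta_\delta = 1 + \log C/\log\delta$. Writing $\varepsilon = \delta^{k+t}$ so that $t - m \in [1,2)$, the claimed inequality $(C\delta)^m \le (1/\delta)(\varepsilon/\delta^k)^{\eta_\delta}$ becomes $(t-m)\eta_\delta \le 1$, which is direct when $t - m \le 1/\eta_\delta$. For the remaining range, I adjoin one extra level $i = m+1$ at the price of a per-level factor that degrades linearly in $s := \varepsilon/\delta^{k+2+m}$; the resulting factor $Cs\delta$ is absorbed via $s^{1-\eta_\delta} \le (1/\delta)^{1-\eta_\delta} = C$. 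The limit $\eta_\delta \to 1$ as $\delta \to 0$ is immediate from the definition since $\log(1/\delta) \to \infty$.

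I expect the main obstacle to be the per-level counting: to secure the tight constant $M^8$ rather than a higher power of $M$, one must verify that $\varepsilon + 3\delta^{k+1+i} \le R_{k+i}$ at each used level, so that Lemma~\ref{numberofiters} actually delivers $p \le M^4$ rather than a looser bound; combined with the case distinction needed when $\varepsilon$ sits close to a transition point $\delta^{k+n}$, this is where the bulk of the technical bookkeeping will lie.
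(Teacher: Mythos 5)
Your proposal is correct and follows essentially the paper's route: the inclusion into an intersection of approximated-cube boundaries over several levels, factorization by independence of the $a_{k+i}$, the per-level bound $M^4(1+M^4)(33\delta+9\delta)\le 84M^8\delta=C\delta$ via Lemmas \ref{numberofiters}, \ref{epsiloniter} and Remark \ref{applying_prob1ball}, and the identity $C\delta=\delta^{\eta_\delta}$. The only real difference is the endgame. The paper also uses level $k$ itself: it intersects over levels $k,k+1,\ldots,k+L-1$ (the level-$k$ term being exactly \rref{inclusion_of_epsilon-boundaries1}), where $\delta^{k+L+1}<\varepsilon\le\delta^{k+L}$; since $\varepsilon\le\delta^{k+m+1}$ for all these levels, every factor is a ``good'' one, the product is $(C\delta)^{L}$ with one more factor than your $(C\delta)^m$ (as $m=L-1$), and the claim follows in one line from $\delta^{L}<\varepsilon/\delta^{k+1}$ — no adjoined degraded level and no case split over $t-m\le 1/\eta_\delta$ is needed. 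Your patch does work: the absorption $s^{1-\eta_\delta}\le(1/\delta)^{1-\eta_\delta}=C$ together with $C=\delta^{\eta_\delta-1}$ reproduces exactly $C_\delta=1/\delta$. Two small points you should still cover: (i) the range $\varepsilon>\delta^{k+1}$, where no admissible $m\ge 0$ exists, must be disposed of separately; there the bound is trivial since the right-hand side exceeds $1$, as the paper notes. (ii) At the adjoined level $i=m+1$ the sufficient condition you propose to verify, $\varepsilon+3\delta^{k+2+m}\le R_{k+m+1}$, can actually fail (e.g.\ when $a_{k+m+1}=0$ and $\varepsilon$ is near $\delta^{k+m+1}$), so keeping the count at $M^4$ there requires the slightly finer estimate $(R+\varepsilon')/r\le 8+12\delta$ plus the floor in Lemma \ref{numberofiters} and the smallness $\delta<1/(84M^8)$ — a verification that the paper's choice of levels avoids altogether.
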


We call the number $\eta_\delta$ the \emph{boundary exponent} of $\mathscr{D}_\delta$.\\

\begin{proof}[Proof of Theorem \ref{smallness_of_boundary_theorem}]
  First, notice that because $\delta < 1/(84M^8)$, we have $\eta_\delta \in (0,1)$. 
  If $\varepsilon > \delta^{k+1}$, then
  \begin{align*}
    \frac{1}{\delta} \left(\frac{\varepsilon}{\delta^k} \right)^{\eta_\delta} > \frac{1}{\delta} \delta^{\eta_\delta} \ge \frac{1}{\delta} \delta = 1.
  \end{align*}
  Hence, we can assume that $\varepsilon \le \delta^{k+1}$. Let $L > 0$ be the natural number such that
  \begin{align*}
    \delta^{k+L+1} < \varepsilon \le \delta^{k+L}.
  \end{align*}
  Then $\varepsilon \le \delta^{k+m}$ for every $m \le L$. Notice that $A_\beta^{k+m}(\omega) = A_\beta^{k+m}(\omega_{k+m})$ for every $m \le L$ and $\omega \in \Omega$. Thus, we see that
  \begin{align*}
    \Pro_\omega \left( x \in \bigcup_\alpha \epsb \bar{Q}_\alpha^k(\omega)\right) &\overset{\text{\rref{inclusion_of_epsilon-boundaries2}}}{\le}
    \Pro_\omega \left( x \in \bigcap_{m = 0}^{L-1} \bigcup_\beta \partial_{\varepsilon+3\delta^{k+m+1}} A_\beta^{k+m}(\omega)\right) \\
    &= \Pro_\omega \left( \left\{ \omega \in \Omega : x \in \bigcap_{m = 0}^{L-1} \bigcup_\beta \partial_{\varepsilon+3\delta^{k+m+1}} 
    A_\beta^{k+m}(\omega_{k+m}) \right\} \right) \\
    &= \Pro_\omega \left( \bigcap_{m = 0}^{L-1} \left\{ \omega \in \Omega : x \in \bigcup_\beta \partial_{\varepsilon+3\delta^{k+m+1}} 
    A_\beta^{k+m}(\omega_{k+m}) \right\} \right) \\
    &= \prod_{m=0}^{L-1} \Pro_\omega \left( \left\{ \omega \in \Omega : x \in \bigcup_\beta \partial_{\varepsilon+3\delta^{k+m+1}} 
    A_\beta^{k+m}(\omega_{k+m}) \right\} \right).
  \end{align*}
  By Lemma \ref{numberofiters}, we know that there are at most $M^4$ approximated cubes of given level that contain $x$. 
  Furthermore by Lemma \ref{epsiloniter}, we know that every $\varepsilon$-boundary of an approximated cube is a subset of a union 
  of at most $4M^4$ $\varepsilon$-boundaries of balls, $2M^4$ of which are inner balls of radius $r_{k+m}(\omega) = (1/4)(\delta^{k+m} + \omega_{k+m} \delta^{k+m+1})$ 
  and $2M^4$ of which are outer balls of radius $R_{k+m}(\omega) = \delta^{k+m} + \omega_{k+m} \delta^{k+m+1}$. 
  Since $\varepsilon+3\delta^{k+m+1} \le 4\delta^{k+1}$, we get
  \begin{eqnarray*}
    \bigcup_\beta \partial_{\varepsilon+3\delta^{k+m+1}} A_\beta^{k+m}(\omega)
    &\overset{\ref{numberofiters}}{=}& \bigcup_{j = 1}^{M^4} \partial_{\varepsilon+3\delta^{k+m+1}} A_{\beta_j}^{k+m}(\omega) \\
    &\overset{\ref{epsiloniter}}{\subseteq}& \bigcup_{i = 1}^{2M^8} \partial_{4\delta^{k+m+1}} B\left(x_i, r_{k+m}(\omega) \right)
                                        \cup \bigcup_{i = 1}^{2M^8} \partial_{4\delta^{k+m+1}} B\left(x_i, R_{k+m}(\omega) \right),
  \end{eqnarray*}
  Thus, by Remark \ref{applying_prob1ball}, we see that
  \begin{eqnarray*}
    \prod_{m=0}^{L-1} \Pro_\omega \left( \left\{ \omega \in \Omega : x \in \bigcup_\beta \partial_{\varepsilon+3\delta^{k+m+1}} 
    A_\beta^{k+m}(\omega_{k+m}) \right\} \right)
    &\le& \prod_{m=0}^{L-1} \left( \sum_{i=1}^{2M^8} 33\delta + \sum_{i=1}^{2M^8} 9\delta \right) \\
    &\le& (84M^8\delta)^L \\
    &=& (C\delta)^L.
  \end{eqnarray*}
  Since $C\delta = \delta^{\log (C\delta) / \log \delta} = \delta^{\eta_\delta}$ and $\eta_\delta \ge 0$, we see that
  \begin{align*}
    (C\delta)^L = (\delta^L)^{\eta_\delta} \le \left( \frac{\varepsilon}{\delta^{k+1}} \right)^{\eta_\delta} \le C_\delta \left( \frac{\varepsilon}{\delta^k} \right)^{\eta_\delta}
  \end{align*}
  which is what we wanted.
\end{proof}

As a simple corollary of Theorem \ref{smallness_of_boundary_theorem} we get the following result.
\begin{corollary}
  \label{smallness_of_boundary_corollary}
  For every $x \in X$ and a dyadic cube $Q_\alpha^k(\omega)$ we have
  \begin{align}
    \label{vanishboundary}
    \Pro_\omega \left( x \in \partial Q_\alpha^k(\omega) \right) = 0.
  \end{align}
\end{corollary}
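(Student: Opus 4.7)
The plan is an immediate limiting argument from Theorem \ref{smallness_of_boundary_theorem}. The key pointwise observation is the containment $\partial Q_\alpha^k(\omega) \subseteq \epsb \Qb_\alpha^k(\omega)$, valid for every $\varepsilon > 0$. Indeed, if $x$ lies in the topological boundary of $\Qb_\alpha^k$, then $x \in \Qb_\alpha^k \cap \overline{(\Qb_\alpha^k)^c}$, so either $d(x,(\Qb_\alpha^k)^c) = 0$ (when $x \in \Qb_\alpha^k$) or $d(x,\Qb_\alpha^k) = 0$ (when $x \in (\Qb_\alpha^k)^c$); in either case $x$ belongs to the $\varepsilon$-boundary. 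Since \rref{interior_and_closure_of_dcubes} yields $\partial \Qt_\alpha^k = \partial \Qb_\alpha^k$, the same containment holds for the open cubes, so the ambiguity in the notation $Q_\alpha^k(\omega)$ is harmless.

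Assuming $\delta \in (0, 1/(84M^8))$ so that $\eta_\delta > 0$ and Theorem \ref{smallness_of_boundary_theorem} applies, monotonicity of $\Pro_\omega$ then gives
\begin{align*}
  \Pro_\omega\bigl(x \in \partial Q_\alpha^k(\omega)\bigr) \leq \Pro_\omega\Bigl(x \in \bigcup_\beta \epsb \Qb_\beta^k(\omega)\Bigr) \leq C_\delta \Bigl(\frac{\varepsilon}{\delta^k}\Bigr)^{\eta_\delta}
\end{align*}
for every $\varepsilon > 0$. I would then conclude by letting $\varepsilon \to 0^+$ and invoking $\eta_\delta > 0$ to force the right-hand side to $0$. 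There is essentially no obstacle here: the full content of the corollary is already packaged into the strict positivity of the boundary exponent supplied by the theorem, and the only thing needed beyond citing it is the elementary inclusion of topological boundaries inside $\varepsilon$-boundaries.
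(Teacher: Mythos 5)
Your argument is correct and is precisely the intended one: the paper states this as an immediate consequence of Theorem \ref{smallness_of_boundary_theorem} without giving details, and the inclusion $\partial Q_\alpha^k(\omega) \subseteq \epsb \Qb_\alpha^k(\omega)$ for every $\varepsilon>0$, followed by letting $\varepsilon \to 0$ with $\eta_\delta>0$, is exactly the missing elementary step. Your remarks on the open/closed ambiguity via \rref{interior_and_closure_of_dcubes} and on the standing assumption $\delta \in (0,1/(84M^8))$ are consistent with the context of the theorem.
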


The corollary holds also for the random dyadic cubes in \cite{hytonenkairema}
and \cite{auscherhytonen} (see Theorem 5.6 in \cite{hytonenkairema} and Theorem 2.13 in \cite{auscherhytonen}).

\subsection{Boundedly many adjacent dyadic systems}

In $\R^n$ it is very easy to construct a finite number of adjacent dyadic systems without adding a random element to the systems: 
one example of these systems is
\begin{align}
  \label{adjacent_systems_Rn}
  \mathscr{D}(t) := \left\{ 2^{-k} ([0,1)^n + m + (-1)^k t) : k \in \Z, m \in \Z^n \right\}, \ \ \ t \in \{0, 1/3, 2/3\}^n,
\end{align}
as in \cite{hytonenlaceyperez}. In general doubling metric spaces it is perhaps easier to think of the similar systems as a special case of the random dyadic systems
since this makes it possible to use probabilistic arguments in the proofs. 

With an independent random dyadic systems we used an infinite sample space and thus we had an infinite number of different dyadic systems. 
By taking a finite sample space
\begin{align}\label{finiteOmega}
  \Omega := \left\{ 0, 1, 2, \ldots, \left\lfloor \frac{1}{\delta} \right\rfloor \right\},
\end{align}
the natural probability measure on a finite set and the random variable $a \colon \Omega \to \{0,1,2,\ldots,\lfloor 1/\delta \rfloor\}$, $a(\omega) = \omega$,
we get only a finite number of systems but at the same time we give up the independence we used previously. In other words, every 
$\omega \in \Omega$ defines all the radii of all the cubes of every level in the same way. Namely: we 
have $r_k(\omega) := (1/4) \left(\delta^k + \omega\delta^{k+1}\right)$ and $R_k(\omega) := 4r_k(\omega)$ for every $k \in \Z$.

The benefit of adjacent dyadic systems is to provide an efficient tool for approximating geometric objects (like arbitrary balls, which are uncountable in number and often require subtle covering lemmas to deal with) by dyadic ones (like dyadic cubes, which are countable in number and have essentially trivial covering properties). Here we show that the adjacent dyadic systems just described have the following approximation property, which strengthens the earlier versions in abstract spaces:

\begin{theorem}
  \label{adjacent_systems_theorem}
  Let $X$ be a doubling metric space and $\delta < 1 / (168M^8)$, where $M$ is the doubling constant of $X$. Let $\mathscr{D}(\omega)$, $\omega\in\Omega$, be the adjacent dyadic systems as defined after \eqref{finiteOmega}.
  Then for every ball $B := B(x,r) \subseteq X$ and every $p \in \N$, there is an $\omega \in \Omega$ and a cube $Q \in \mathscr{D}(\omega)$ such that
  \begin{eqnarray}
    \label{ball_within_cube}               &B \subseteq Q, \\
    \label{sidelength_of_a_cube}           &l(Q) \le \delta^{-2} r(B), \\
    \label{enlargement_within_enlargement} &\left(\frac{1}{\delta}\right)^p B \subseteq Q^{(p)},
  \end{eqnarray}
  where $l(Q) = \delta^k$ if $Q = Q_\alpha^k$, $r(B)$ is the radius of $B$ and $Q^{(p)}$ is the unique ancestor of $Q$ of generation $k-p$.
\end{theorem}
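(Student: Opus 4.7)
The plan is to apply the probabilistic method on the finite sample space $\Omega = \{0,1,\ldots,\lfloor 1/\delta\rfloor\}$. For a given ball $B = B(x,r)$, I would fix a single level $k \in \Z$ matched to $r$ and consider the two ``bad'' events
\begin{align*}
  E_1 &:= \Big\{\omega \in \Omega : x \in \bigcup_\alpha \partial_{r}\Qb_\alpha^k(\omega)\Big\}, \\
  E_2 &:= \Big\{\omega \in \Omega : x \in \bigcup_\beta \partial_{\delta^{-p}r}\Qb_\beta^{k-p}(\omega)\Big\}.
\end{align*}
If $\omega \notin E_1$, then $x$ lies at distance at least $r$ from the boundary of every level-$k$ cube, so $x$ sits in the interior of a unique open cube $\Qt_\alpha^k(\omega)$ (by \rref{intersection_of_open_and_closed_dcubes}) and $B \subseteq \Qb_\alpha^k(\omega) =: Q$. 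Likewise $\omega \notin E_2$ yields $(1/\delta)^p B \subseteq \Qb_\beta^{k-p}(\omega)$, and the nesting property~\rref{union_of_descendants_of_closed_dcubes} forces this second cube to be the $p$th ancestor $Q^{(p)}$. Thus it suffices to show $\Pro_\omega(E_1 \cup E_2) < 1$.

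For the level I would take the unique $k \in \Z$ with $\delta^{k+2} \leq r < \delta^{k+1}$, so that $\delta^{-1}r < \delta^k \leq \delta^{-2}r$. This immediately verifies~\rref{sidelength_of_a_cube} via $l(Q) = \delta^k \leq \delta^{-2} r(B)$. More importantly, it places both $r$ (at level $k$) and $\delta^{-p}r$ (at level $k-p$) into the regime $\varepsilon \leq \delta^{k+1}$ (resp.\ $\varepsilon \leq \delta^{k-p+1}$), so that the enlarged thickness $\varepsilon + 3\delta^{k+1}$ (resp.\ $\varepsilon + 3\delta^{k-p+1}$) is at most $4\delta^{k+1}$ (resp.\ $4\delta^{k-p+1}$)---exactly the hypothesis of Remark~\ref{applying_prob1ball}.

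For each of $E_1$ and $E_2$ the estimate would then follow the $L=1$ factor of the proof of Theorem~\ref{smallness_of_boundary_theorem}: the inclusion~\rref{inclusion_of_epsilon-boundaries1} embeds each cube boundary into the enlarged approximated-cube boundary; Lemma~\ref{numberofiters} bounds by $M^4$ the number of approximated cubes whose enlarged boundary contains $x$; Lemma~\ref{epsiloniter} dissects each such boundary into at most $1+M^4$ inner-ball plus $1+M^4$ outer-ball boundaries; and Remark~\ref{applying_prob1ball} bounds each resulting single-ball probability by $33\delta$ or $9\delta$. Multiplying out gives $\Pro_\omega(E_j) \leq 84M^8\delta$ for $j=1,2$, and a union bound yields $\Pro_\omega(E_1 \cup E_2) \leq 168M^8\delta < 1$ by the hypothesis $\delta < 1/(168M^8)$.

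The only delicate point I anticipate is conceptual rather than computational: in the adjacent setting a single parameter $a(\omega)=\omega$ governs \emph{all} levels simultaneously, so $E_1$ and $E_2$ are strongly correlated and the product-of-probabilities trick behind Theorem~\ref{smallness_of_boundary_theorem} is unavailable. One must therefore restrict to union bounds, which forces us to handle only two levels at a time and produces the factor of $2$ visible in the hypothesis $1/(168M^8) = 1/(2\cdot 84M^8)$. Once this trade-off is accepted, the argument is a clean single application of the probabilistic method.
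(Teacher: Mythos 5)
Your proposal is correct and follows essentially the same route as the paper's proof: the same choice of level $k$ with $\delta^{k+2}\lesssim r\lesssim\delta^{k+1}$, the same two bad boundary events at levels $k$ and $k-p$, each bounded by $84M^8\delta$ via \rref{inclusion_of_epsilon-boundaries1}, Lemmas \ref{numberofiters} and \ref{epsiloniter} and Remark \ref{applying_prob1ball}, followed by a union bound giving $168M^8\delta<1$ and the probabilistic-method conclusion. The only cosmetic difference is that you take boundary thickness $r$ and $\delta^{-p}r$ where the paper uses the slightly larger $\delta^{k+1}$ and $\delta^{k-p+1}$, which changes nothing, and your closing remark about independence being unavailable (hence the union bound and the factor $2$ in $168=2\cdot 84$) matches the paper's setting exactly.
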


We make a few remarks on the history and applications of this kind of results. Without the last condition \rref{enlargement_within_enlargement}, the analogous theorem is well known in $\R^n$ for the system \rref{adjacent_systems_Rn}. The extension to metric spaces (still without \rref{enlargement_within_enlargement}) was obtained in \cite{hytonenkairema} and applied to weighted norm inequalities in \cite{hytonenkairema,kairema}. In $\R^n$, the analogue of Theorem~\ref{adjacent_systems_theorem} with all conditions was established in \cite[Lemma 2.5]{hytonenlaceyperez}, again with applications to weighted norm inequalities in the same paper. Motivated by its usefulness, and with potential metric space extensions of \cite{hytonenlaceyperez} in mind,  we here extend this stronger version to doubling metric spaces, noting that the previous non-probabilistic construction in \cite{hytonenkairema} does not seem to allow for such a strengthening.  Our new construction allows us to use the results of Section 
\ref{boundaries_and_iterations} and Lemma \ref{prob1ball}, which help us to give a fairly straightforward proof for the theorem.

\begin{proof}[Proof of Theorem \ref{adjacent_systems_theorem}]
  Let $B(x,r) \subseteq X$. Choose $k \in \Z$ such that $\delta^{k+2} < r \le \delta^{k+1}$. By Lemma \ref{numberofiters}, there are at most 
  $M^4$ indices $\alpha$ such that $x \in \partial_{\delta^{k+1}} A_\alpha^k$ and by Lemma \ref{epsiloniter}, we know that the $\varepsilon$-boundary 
  of an approximated cube is a subset of at most $4M^4$ $\varepsilon$-boundaries of balls. $2M^4$ of these balls are inner balls of radius 
  $r_k(\omega) = (1/4) \left( \delta^k + \omega\delta^{k+1}\right)$ and $2M^4$ of these balls are outer balls of radius $R_k(\omega) = 4r_k(\omega)$. Thus, we get
  \begin{eqnarray*}
    \bigcup_\alpha \partial_{\delta^{k+1}} Q_\alpha^k(\omega) &\overset{\text{\rref{inclusion_of_epsilon-boundaries1}}}{\subseteq}&
    \bigcup_\alpha \partial_{4\delta^{k+1}} A_\alpha^k(\omega) \\
    &\overset{\ref{numberofiters}}{\overset{\ref{epsiloniter}}{\subseteq}}&
    \bigcup_{i = 1}^{2M^8} \partial_{4\delta^{k+1}} B\left(x_i, r_k(\omega) \right)
                                        \cup \bigcup_{i = 1}^{2M^8} \partial_{4\delta^{k+1}} B\left(x_i, R_k(\omega) \right),
  \end{eqnarray*}
  and furthermore
  \begin{align*}
    \Pro_\omega \left( x \in \bigcup_\alpha \partial_{\delta^{k+1}} Q_\alpha^k(\omega) \right) 
    \overset{\ref{applying_prob1ball}}{\le} 2M^8 \left( 33\delta + 9\delta \right) = 84M^8 \delta
  \end{align*}
  Similarly,
  \begin{align*}
    \Pro_\omega \left( x \in \bigcup_\alpha \partial_{\delta^{k-p+1}} Q_\alpha^{k-p}(\omega) \right) \le 84M^8\delta
  \end{align*}
  and therefore
  \begin{align*}
    \Pro_\omega \left( x \in \left( \bigcup_\alpha \partial_{\delta^{k-p+1}} Q_\alpha^{k-p}(\omega) \cup \bigcup_\alpha 
    \partial_{\delta^{k+1}} Q_\alpha^k(\omega) \right) \right) \le 168M^8 \delta < 1.
  \end{align*}
  Thus the complement event has probability $1 - 168M^8 \delta > 0$. Hence, there exists an $\omega \in \Omega$ such that
  \begin{align}
    \label{x_not_in}
    x \notin \bigcup_\alpha \partial_{\delta^{k-p+1}} Q_\alpha^{k-p}(\omega) \cup \bigcup_\alpha 
    \partial_{\delta^{k+1}} Q_\alpha^k(\omega)
  \end{align}

  Let $Q_\alpha^k(\omega) \ni x$. Now \rref{x_not_in} implies that
  \begin{align*}
    d\left( x, \left(Q_\alpha^k(\omega)\right)^c\right) \ge \delta^{k+1} \ge r
  \end{align*}
  and hence, $B(x,r) \subseteq Q_\alpha^k (\omega)$. Because now $x \in Q_\alpha^k(\omega)^{(p)} =: Q_\theta^{k-p}(\omega)$, 
  \rref{x_not_in} also implies that
  \begin{align*}
    d\left( x, \left(Q_\alpha^{k-p}(\omega)\right)^c\right) \ge \delta^{k-p+1} \ge \delta^{-p}r,
  \end{align*}
  and hence, $B(x,\delta^{-p}r) \subseteq Q_\theta^{k-p}(\omega)$. We also see that
  \begin{align*}
    l(Q_\alpha^k) = \delta^k = \delta^{-2} \delta^{k+2} < \delta^{-2} r,
  \end{align*}
  which completes the proof.
\end{proof}

\section{Application: H\"older-continuous splines and wavelets}
\label{application}

In this section we indicate the consequences of our new random cubes for the existence of H\"older-continuous spline and wavelet bases in abstract metric measure spaces. As mentioned before, we can essentially just feed our new cubes into the construction of \cite{auscherhytonen} and collect the results. For concreteness, let us nevertheless recall the relevant definitions.

Besides being of independent interest, the spline functions serve as a natural intermediate step between the random cubes and the wavelets. Thus we consider them first.
Let $(X,d)$ be a doubling metric space.

\begin{defin}
  A set of functions $s_\alpha^k: X \to [0,1]$ is a \emph{system of spline functions} if the following properties are satisfied for some points $\mathscr{Z}^k:=\{z^k_\alpha\}_{\alpha}$, where $\mathscr{Z}^k\subseteq\mathscr{Z}^{k+1}\subseteq X$ for all $k$, and for constants $\delta \in (0,1)$ and $C>c> 0$:
  \begin{eqnarray}
    \text{bounded support: } && 1_{B(z_\alpha^k, c\delta^k)} (x) \le s_\alpha^k(x) \le 1_{B(z_\alpha^k, C\delta^k)}(x), \label{boundedsupp} \\ %
    \text{interpolation: }   && s_\alpha^k(z_\beta^k) = \begin{cases} 1, & \mbox{if } \alpha = \beta \\ 0, & \mbox{if } \alpha \neq \beta \end{cases}, \label{interpolation} \\
    \text{partition: }    && \sum_\alpha s_\alpha^k(x) = 1, \label{splinesum} \\
    \text{refinement: }   && s_\alpha^k(x) = \sum_\beta p_{\alpha\beta} \cdot s_\beta^{k+1}(x), \label{refinement}
  \end{eqnarray}
  where $\{p_{\alpha\beta}\}_\beta$ is a finitely nonzero set of nonnegative coefficients with $\sum_\alpha p_{\alpha\beta}^k = 1$. The indices $k$ and $\alpha$ run respectively over $\Z$ and $\N$ if $X$ is unbounded, or over $\{k\in\Z:k\geq k_0\}$ and $\{0,1,\ldots,n_k\}$ for some finite $k_0\in\Z$ and $n_k\in\N$ if $X$ is bounded.

We call the splines \emph{H\"older-continuous of exponent $\eta$} if
  \begin{align*}
    | s_\alpha^k(x) - s_\alpha^k(y) | \le C \left( \frac{d(x,y)}{\delta^k} \right)^{\eta}.
  \end{align*}
If $\eta = 1$, we call the splines \emph{Lipschitz-continuous}.
\end{defin}

Although the properties \rref{boundedsupp} through \rref{refinement} are already non-trivial, it is not difficult to provide simple systems of functions that satisfy them. For example, we can just take a system of non-random half-open dyadic cubes $\mathscr{D} := \{ Q_\alpha^k : k \in \Z, \alpha \in \N \}$ (Theorem 2.2 in \cite{hytonenkairema}) and set $s_\alpha^k = 1_{Q_\alpha^k}$.

Also observe that the refinement  \eqref{refinement} is the only property that ties the splines of different generations $k$ together. In the absence of this property, it is not difficult to construct systems of function with all other properties, even Lipschitz-continuous. Such systems are well known as partitions of unity. The point of spline functions is to make a sequence of partitions of unity compatible with each other in the sense of the refinement property \eqref{refinement}.

In the abstract set-up, this issue was first addressed in \cite{auscherhytonen}, where the following connection with random dyadic cubes was established: (The result is not explicitly formulated in this way in \cite{auscherhytonen}, but it can be easily read from the proof of Theorem 3.1 in \cite{auscherhytonen}.)

\begin{theorem}[\cite{auscherhytonen}]\label{regular_splines_theorem}
If a doubling metric space supports an  independent random dyadic system $\mathscr{D}$ of boundary exponent $\eta$, then it also supports a system of H\"older-continuous splines with the same exponent $\eta$. In fact, such a system can be defined by
\begin{equation}\label{eq:splineFormula}
  s_\alpha^k(x) := \Pro_\omega \left( x \in \Qb_\alpha^k(\omega) \right).
\end{equation}
\end{theorem}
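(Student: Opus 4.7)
The plan is to verify each of the five defining properties of a spline system for the formula \eqref{eq:splineFormula}, using the structural properties of the random dyadic cubes in Theorem \ref{open_and_closed_dcubes}, the smallness-of-boundary estimate of Theorem \ref{smallness_of_boundary_theorem}, and the independence of the random variables $a_k$ across different levels $k$ built into the independent random dyadic system. The bounded support \eqref{boundedsupp} is immediate from the deterministic inclusions $B(z_\alpha^k,\tfrac{1}{5}\delta^k)\subseteq\Qb_\alpha^k(\omega)\subseteq B(z_\alpha^k,3\delta^k)$ of \eqref{inclusion_of_dcubes}, which hold for every $\omega\in\Omega$. The interpolation property \eqref{interpolation} follows because $z_\alpha^k$ lies in its own inner ball and hence in $\Qt_\alpha^k(\omega)\subseteq\Qb_\alpha^k(\omega)$ for every $\omega$, while for $\beta\ne\alpha$ the point $z_\beta^k$ lies in $\Qt_\beta^k(\omega)$, which by \eqref{intersection_of_open_and_closed_dcubes} is disjoint from $\Qb_\alpha^k(\omega)$. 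The partition \eqref{splinesum} combines the covering \eqref{union_of_closed_dcubes} with Corollary \ref{smallness_of_boundary_corollary}: almost surely $x$ avoids the boundary of every closed cube, so for such $\omega$ exactly one indicator $1_{\Qb_\alpha^k(\omega)}(x)$ equals $1$, and an interchange of $\sum_\alpha$ with the expectation, justified by monotone convergence, yields $\sum_\alpha s_\alpha^k(x)=1$.

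The main task is the refinement \eqref{refinement}. The decomposition \eqref{union_of_descendants_of_closed_dcubes} at $l=k+1$ gives
$$
  1_{\Qb_\alpha^k(\omega)}(x) = \sum_\beta 1_{\{(k+1,\beta)\le_\omega(k,\alpha)\}}\cdot 1_{\Qb_\beta^{k+1}(\omega)}(x).
$$
The crucial structural observation is that whether $(k+1,\beta)\le_\omega(k,\alpha)$ is decided entirely by where the deterministic point $z_\beta^{k+1}$ sits relative to the random balls $B(z_\alpha^k,r_k(\omega))$ and $B(z_\alpha^k,4r_k(\omega))$, and hence depends only on $a_k$; on the other hand $\Qb_\beta^{k+1}(\omega)$ is constructed from the radii at levels $l\ge k+1$ and so depends only on $(a_l)_{l\ge k+1}$. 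Under the product measure of the independent random dyadic system these two groups are independent, so the expectation of the product factorizes, yielding
$$
  s_\alpha^k(x) = \sum_\beta p_{\alpha\beta}\, s_\beta^{k+1}(x),\qquad
  p_{\alpha\beta}:=\Pro_\omega\bigl((k+1,\beta)\le(k,\alpha)\bigr).
$$
The normalization $\sum_\alpha p_{\alpha\beta}=1$ is immediate from the uniqueness of the parent asserted in Lemma \ref{partial_order_of_dyadic_points}, and the finite-nonzero condition from the fact that any potential parent $(k,\alpha)$ must satisfy $z_\alpha^k\in B(z_\beta^{k+1},2\delta^k)$, which by the doubling property contains only boundedly many centres.

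For the H\"older-continuity of exponent $\eta_\delta$, set $\varepsilon:=d(x,y)$. If $\varepsilon\ge\delta^k$ then $|s_\alpha^k(x)-s_\alpha^k(y)|\le 1\le(\varepsilon/\delta^k)^{\eta_\delta}$ is trivial, so assume $\varepsilon<\delta^k$. The central observation is that whenever $1_{\Qb_\alpha^k(\omega)}(x)\ne 1_{\Qb_\alpha^k(\omega)}(y)$, one of the two points realizes distance at most $\varepsilon$ from the complement of $\Qb_\alpha^k(\omega)$, so $x\in\partial_{\varepsilon'}\Qb_\alpha^k(\omega)$ for every $\varepsilon'>\varepsilon$. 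Hence for any such $\varepsilon'$,
$$
  |s_\alpha^k(x)-s_\alpha^k(y)| \le \Pro_\omega\Big(x\in\bigcup_\alpha\partial_{\varepsilon'}\Qb_\alpha^k(\omega)\Big) \le C_\delta\Big(\frac{\varepsilon'}{\delta^k}\Big)^{\eta_\delta}
$$
by Theorem \ref{smallness_of_boundary_theorem}, and letting $\varepsilon'\downarrow\varepsilon$ in the continuous right-hand side gives the desired bound. The subtlest step is the refinement argument, where one must carefully identify which random variables govern which geometric objects in order to invoke independence; this structural feature of the independent random system, which is absent in the adjacent dyadic system of Section \ref{two_different}, is precisely what makes the spline construction possible.
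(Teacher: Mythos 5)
Your proof is correct and is precisely the argument the paper intends: the paper itself gives no proof of Theorem~\ref{regular_splines_theorem}, deferring to the proof of Theorem~3.1 in \cite{auscherhytonen}, and your verification (splines as containment probabilities, \eqref{boundedsupp}--\eqref{splinesum} from the deterministic cube properties and Corollary~\ref{smallness_of_boundary_corollary}, refinement \eqref{refinement} via independence of the level-$k$ parent--child relation, which depends only on $a_k$ since the centres are fixed, from the level-$(k+1)$ cubes, which depend only on $(a_l)_{l\ge k+1}$, and H\"older continuity from the boundary estimate) is exactly that argument adapted to the new randomization. Two cosmetic remarks: the child decomposition $1_{\Qb_\alpha^k(\omega)}(x)=\sum_\beta 1_{\{(k+1,\beta)\le_\omega(k,\alpha)\}}1_{\Qb_\beta^{k+1}(\omega)}(x)$ holds only for almost every $\omega$ (sibling closed cubes may share boundary points), which is all you need after taking expectations; and the finitely-nonzero condition concerns $\{p_{\alpha\beta}\}_\beta$ for \emph{fixed} $\alpha$, so the counting should use the $\delta^{k+1}$-separation of the points $z_\beta^{k+1}$ lying in $B(z_\alpha^k,2\delta^k)$ --- the same doubling argument you give, with the roles of $\alpha$ and $\beta$ interchanged.
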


In \cite{auscherhytonen}, this gave the existence of H\"older-continuous splines with some small exponent $\eta>0$; combined with our new Theorem \ref{smallness_of_boundary_theorem}, it gives the following:

\begin{corollary}
  In every doubling metric space there exists a system of H\"older-continuous spline functions of every exponent $\eta \in [0,1)$.
\end{corollary}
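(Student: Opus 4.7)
The plan is to chain the two main ingredients already in hand: Theorem~\ref{smallness_of_boundary_theorem}, which produces independent random dyadic systems with boundary exponent $\eta_\delta$ approaching $1$ as $\delta\to 0$, and Theorem~\ref{regular_splines_theorem}, which converts a random dyadic system of boundary exponent $\eta$ into a spline system with the same H\"older exponent $\eta$. So given an arbitrary target $\eta\in[0,1)$, I would first choose $\delta$ small.

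More concretely, fix $\eta\in[0,1)$. Since $\eta_\delta=1-\log C/\log(1/\delta)$ with $C=84M^8$ tends to $1$ as $\delta\to 0^+$, I can pick $\delta\in(0,1/(84M^8))$ small enough that $\eta_\delta\geq \eta$. Theorem~\ref{smallness_of_boundary_theorem} then gives an independent random dyadic system $\mathscr{D}_\delta$ of boundary exponent $\eta_\delta$, and Theorem~\ref{regular_splines_theorem} yields a spline system $\{s_\alpha^k\}$ defined by \eqref{eq:splineFormula} satisfying
\begin{equation*}
  |s_\alpha^k(x)-s_\alpha^k(y)|\leq C'\Big(\frac{d(x,y)}{\delta^k}\Big)^{\eta_\delta}.
\end{equation*}

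It remains to upgrade this to H\"older continuity of the originally desired exponent $\eta$. This is the only mildly non-trivial step, and it is handled by a standard dichotomy: if $d(x,y)\leq \delta^k$, then $d(x,y)/\delta^k\leq 1$, so the inequality with the larger exponent $\eta_\delta\geq\eta$ automatically implies the inequality with exponent $\eta$ (with the same constant $C'$). If instead $d(x,y)>\delta^k$, then $(d(x,y)/\delta^k)^\eta>1$, and since \eqref{boundedsupp} forces $0\leq s_\alpha^k\leq 1$, we trivially get $|s_\alpha^k(x)-s_\alpha^k(y)|\leq 1\leq(d(x,y)/\delta^k)^\eta$. Combining both cases gives the H\"older estimate with exponent $\eta$.

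I do not anticipate any real obstacle here: once the correspondence between the boundary exponent of the random cubes and the H\"older exponent of the spline system is in place (which is exactly what Theorem~\ref{regular_splines_theorem} records), the corollary is simply a matter of choosing $\delta$ small enough and observing the elementary fact that on a bounded function, a higher H\"older exponent on small scales trivially implies a lower one on all scales. The substantive content is therefore entirely contained in Theorems~\ref{smallness_of_boundary_theorem} and~\ref{regular_splines_theorem}; this corollary is just the book-keeping that combines them.
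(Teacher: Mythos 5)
Your proposal is correct and follows exactly the route the paper intends: the corollary is obtained by choosing $\delta$ small enough that $\eta_\delta\geq\eta$ and chaining Theorem~\ref{smallness_of_boundary_theorem} with Theorem~\ref{regular_splines_theorem}, the only extra observation being the trivial downgrade from exponent $\eta_\delta$ to $\eta$ using $0\leq s_\alpha^k\leq 1$, which you handle properly.
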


We then proceed to wavelets. The setting is now a doubling metric space $(X,d)$ equipped with a Borel measure $\mu$ with the doubling property
\begin{equation*}
  \mu(B(x,2r))\leq C\mu(B(x,r)).
\end{equation*}

\begin{defin}
A set of functions $\psi^k_\alpha:X\to\R$ is a \emph{basis of wavelets with $\varrho$-localization},  where $\varrho:[0,\infty)\to[0,1]$ is a non-increasing function, if the following properties are satisfied for some points $\mathscr{Y}^k:=\{y^k_\alpha\}_\alpha\in X$ and constants $\delta \in (0,1)$ and $C > 0$:
  \begin{eqnarray}
    \text{vanishing mean:} &&\int\psi^k_\alpha(x)d\mu(x)=0, \\
    \text{localization:} && \abs{\psi^k_\alpha(x)}\leq\frac{C}{\sqrt{\mu(B(y^k_\alpha,\delta^k))}}\varrho\Big(\frac{d(x,y^k_\alpha)}{\delta^k}\Big), 
  \end{eqnarray}
and the functions $\psi^k_\alpha$ form an orthonormal basis of $L^2_0(\mu)$, where
\begin{equation*}
  L^2_0(\mu):=\begin{cases} L^2(\mu), & \text{if $X$ is unbounded}, \\
     \Big\{f\in L^2(\mu):\int_X f(x)d\mu(x)=0\Big\}, & \text{if $X$ is bounded}.\end{cases}
\end{equation*}
The indices $k$ and $\alpha$ run over similar sets as in the case of splines.
% respectively over $\Z$ and $\N$ if $X$ is unbounded, or over $\{k\in\Z:k\geq k_0\}$ and $\{0,1,\ldots,n_k\}$ for some finite $k_0\in\Z$ and $n_k\in\N$ if $X$ is bounded.

We call the $\varrho$-localized wavelets \emph{H\"older-continuous of exponent $\eta$} if
  \begin{align*}
    | \psi_\alpha^k(x) - \psi_\alpha^k(y) | \leq \frac{C}{\sqrt{\mu(B(y^k_\alpha,\delta^k))}}\varrho\Big(\frac{d(x,y^k_\alpha)}{\delta^k}\Big)
         \left( \frac{d(x,y)}{\delta^k} \right)^{\eta}
  \end{align*}
If $\eta = 1$, we call the wavelets \emph{Lipschitz-continuous}.
\end{defin}

Let us notice that on $\R^d$ or other symmetric spaces, one usually imposes additional self-similarity properties on the wavelets. However, these are hardly meaningful in the generality that we consider, so we insist on this reduced definition.

Two main cases of $\varrho$-localization that we have in mind are:
\begin{itemize}
  \item \emph{perfect localization}: $\varrho=1_{[0,c]}$ for some finite $c\in(0,\infty)$.
  \item \emph{exponential localization}: $\varrho(x)=\exp(-\gamma x)$ for some $\gamma>0$.
\end{itemize}

Without the H\"older-continuity requirement, perfect localization is achieved by the Haar functions, which are readily constructed from the indicators of (non-random) dyadic cubes in the generality of abstract spaces (see e.g.~\cite{hytonenIMRN}).

The existence of H\"older-continuous wavelets with perfect localization (akin to the celebrated Daubechies wavelets on $\R^d$) remains an interesting open problem in abstract metric spaces. For exponential localization, the following connection with the spline bases was established in \cite{auscherhytonen}: (Once again, this explicit statement is not found in \cite{auscherhytonen}, but it can be readily read from the considerations in \cite{auscherhytonen}, Sections 5 and 6.)

\begin{theorem}[\cite{auscherhytonen}]\label{thm:splinesImpliesWavelets}
If a doubling metric space supports a system of H\"older-continuous splines with exponent $\eta$, then it also supports a H\"older-continuous wavelet basis of exponential localization, with the same H\"older-exponent $\eta$.
\end{theorem}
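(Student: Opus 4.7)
The plan is to follow the Auscher--Hyt\"onen construction, feeding our H\"older-continuous splines into their machinery. First I would define the nested spline spaces $V_k := \overline{\mathrm{span}\{s_\alpha^k\}_\alpha}$ in $L^2(\mu)$. The refinement property \eqref{refinement} forces $V_k \subseteq V_{k+1}$; the partition \eqref{splinesum} together with the bounded support \eqref{boundedsupp} and the doubling of $\mu$ yield that $\bigcup_k V_k$ is dense in $L^2_0(\mu)$ and that $\bigcap_k V_k \cap L^2_0(\mu) = \{0\}$. Defining the detail spaces $W_k := V_{k+1} \ominus V_k$ then produces the orthogonal decomposition $L^2_0(\mu) = \bigoplus_k W_k$; the wavelets of generation $k$ will be built as an orthonormal basis of $W_k$.

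To construct such a basis I would first introduce dual splines $\tilde s_\alpha^k \in V_k$ biorthogonal to $\{s_\alpha^k\}_\alpha$, obtained by inverting the Gram matrix $G_k := (\langle s_\alpha^k, s_\beta^k\rangle)_{\alpha,\beta}$. These duals yield pre-wavelets
\[
  \phi_\beta^{k+1} := s_\beta^{k+1} - \sum_\alpha \langle s_\beta^{k+1}, \tilde s_\alpha^k\rangle\, s_\alpha^k \;\in\; W_k,
\]
which inherit the H\"older regularity of exponent $\eta$ from the underlying splines and are localized near $z_\beta^{k+1}$ up to the tail behaviour of the $\tilde s_\alpha^k$. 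A further orthonormalization within each generation (Gram--Schmidt, or inversion of the square root of the Gram matrix of the pre-wavelets) then produces the desired $\psi_\alpha^k$. Vanishing mean is automatic, since $\psi_\alpha^k \in W_k$ is orthogonal to $V_k$, and \eqref{splinesum} ensures $V_k$ contains approximations of the constant function on arbitrarily large sets.

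The main technical obstacle is to show that the two matrix inversions above preserve the H\"older-exponent $\eta$ and upgrade the merely compact support of the splines to genuine exponential localization of the $\psi_\alpha^k$. The standard route is an off-diagonal decay estimate for $G_k^{-1}$: since $\langle s_\alpha^k, s_\beta^k\rangle \neq 0$ forces $d(z_\alpha^k, z_\beta^k) \lesssim \delta^k$ by \eqref{boundedsupp}, the matrix $G_k$ is metrically banded, and a Schur-test or Neumann-series argument (after normalizing by the diagonal) gives exponential decay of the entries of $G_k^{-1}$ in $d(z_\alpha^k, z_\beta^k)/\delta^k$. This decay passes linearly to the duals $\tilde s_\alpha^k$ and hence to the pre-wavelets $\phi_\beta^{k+1}$, and, after the second analogous inversion, to the wavelets $\psi_\alpha^k$; the H\"older estimate with exponent $\eta$ survives each step because the coefficients in the relevant linear combinations are absolutely summable with exponentially decaying weights. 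Doubling of $\mu$ finally allows the volume normalisation to be taken at any nominal centre $y_\alpha^k := z_\alpha^k$, giving the stated bounds and completing the transfer of all properties from splines to wavelets.
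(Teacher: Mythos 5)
Your overall strategy --- nested spline spaces $V_k$, detail spaces $W_k=V_{k+1}\ominus V_k$, projection of finer splines onto $W_k$, and orthonormalization through matrices with exponential off-diagonal decay --- is exactly the route of Auscher--Hyt\"onen (adapting Meyer's algorithm), which is where this paper defers the proof. But there is a genuine gap at the central step: you form a pre-wavelet $\phi_\beta^{k+1}=(I-P_k)s_\beta^{k+1}$ for \emph{every} index $\beta$ of generation $k+1$. That family is linearly dependent in $W_k$: heuristically $\dim W_k=\#\mathscr{Z}^{k+1}-\#\mathscr{Z}^{k}$, whereas you produce $\#\mathscr{Z}^{k+1}$ pre-wavelets, so the Gram matrix of the $\phi_\beta^{k+1}$ is singular --- not merely badly conditioned --- and its inverse square root does not exist; no Schur-test or Neumann-series normalization can repair this, and ordered Gram--Schmidt applied to a dependent spanning set produces zero vectors and destroys any uniform localization. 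The correct construction retains only the splines attached to the \emph{new} points $y^k_\alpha\in\mathscr{Y}^k=\mathscr{Z}^{k+1}\setminus\mathscr{Z}^k$ (this is precisely why the paper remarks that the wavelets are localized around these points), and the crux, which your sketch never addresses, is a uniform \emph{lower} Riesz bound for the projected family $\{(I-P_k)s_y^{k+1}:y\in\mathscr{Y}^k\}$ in $W_k$, with constants independent of $k$. Exponential off-diagonal decay only yields upper bounds; uniform invertibility is a separate statement (already for the spline Gram matrix $G_k$ it rests on the interpolation property \eqref{interpolation} together with bounded overlap, not on a diagonal-normalized Neumann series, and for the projected family at the new points it is the substantive analytic content of Meyer's algorithm in this setting). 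Without that lemma the orthonormalization step, and hence the whole transfer of H\"older regularity and exponential localization to the $\psi^k_\alpha$, does not go through.

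Two smaller points: the identification of the index set (one wavelet per new point) is also what makes the counting of the basis come out right, so it is not a cosmetic choice; and in the unbounded case the completeness statements ($\bigcup_k V_k$ dense, $\bigcap_k V_k\cap L^2_0(\mu)=\{0\}$) and the vanishing-mean argument via \eqref{splinesum} require justification (constants are not in $L^2(\mu)$), though these parts are standard once the exponential localization is in hand.
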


In fact, the construction essentially adapts a classical algorithm from \cite{meyer}, but it is somewhat more complicated than the simple formula \eqref{eq:splineFormula}, so we refer to \cite{auscherhytonen} for details. Let us only point out that the wavelets constructed in this way will be localized around points $y^k_\alpha$, where
\begin{equation*}
  \mathscr{Y}^k:=\{y^k_\alpha\}_\alpha=\mathscr{Z}^{k+1}\setminus\mathscr{Z}^k,
\end{equation*}
and $\mathscr{Z}^k:=\{z^k_\alpha\}_\alpha$ is the point set related to the corresponding splines.

 In \cite{auscherhytonen}, Theorem~\ref{thm:splinesImpliesWavelets} gave the existence of H\"older-continuous wavelets with some small exponent $\eta>0$; combined with our new Theorem~\ref{smallness_of_boundary_theorem} (and Theorem~\ref{regular_splines_theorem}) it gives:

\begin{corollary}
  In every doubling metric space there exists a basis of H\"older-continuous wavelets with exponential localization, for any H\"older-exponent $\eta \in [0,1)$.
\end{corollary}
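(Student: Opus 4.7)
The plan is to chain together the three main ingredients already in place: Theorem \ref{smallness_of_boundary_theorem} (random cubes with boundary exponent arbitrarily close to $1$), Theorem \ref{regular_splines_theorem} (cubes $\Rightarrow$ splines of the same exponent), and Theorem \ref{thm:splinesImpliesWavelets} (splines $\Rightarrow$ exponentially localized wavelets of the same exponent). The corollary will follow by feeding a suitably small $\delta$ down the chain.

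More concretely, first I would fix the target exponent $\eta \in [0,1)$. By Theorem \ref{smallness_of_boundary_theorem}, the boundary exponent of an independent random dyadic system of parameter $\delta$ satisfies
\begin{equation*}
  \eta_\delta = 1 - \frac{\log(84 M^8)}{\log(1/\delta)} \longrightarrow 1 \quad \text{as } \delta \to 0,
\end{equation*}
so I can pick $\delta \in (0, 1/(84 M^8))$ small enough that $\eta_\delta > \eta$. Invoking Theorem \ref{smallness_of_boundary_theorem} then supplies an independent random dyadic system $\mathscr{D}_\delta$ on $(X,d)$ with boundary exponent $\eta_\delta$, and Theorem \ref{regular_splines_theorem} converts this into a system of H\"older-continuous splines with exponent $\eta_\delta$ (concretely via the formula $s_\alpha^k(x) := \Pro_\omega(x \in \Qb_\alpha^k(\omega))$). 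Since $\eta_\delta > \eta$, these splines are a fortiori H\"older-continuous of exponent $\eta$ on the relevant scale.

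Finally, I would feed the H\"older-continuous splines into Theorem \ref{thm:splinesImpliesWavelets}, which produces an orthonormal basis of exponentially localized wavelets on $L^2_0(\mu)$ whose H\"older-exponent matches that of the splines. Since $\eta < \eta_\delta$ was arbitrary, the existence of an exponentially localized H\"older wavelet basis of exponent $\eta$ follows for every $\eta \in [0,1)$, completing the proof.

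There is no real obstacle, since every step is either a direct citation or a trivial comparison of exponents; the only "hard" part was already packaged into the earlier sections, namely establishing Theorem \ref{smallness_of_boundary_theorem} with $\eta_\delta \to 1$. The one thing to be careful about is the logical dependence of the doubling measure $\mu$ (needed to define $L^2_0(\mu)$ and the localization norms in the wavelet statement) versus the purely geometric doubling used in the cube and spline constructions; but since Theorems \ref{regular_splines_theorem} and \ref{thm:splinesImpliesWavelets} are both quoted in the form required here, no extra work is needed.
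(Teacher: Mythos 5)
Your proposal is correct and follows exactly the paper's intended argument: the corollary is obtained by choosing $\delta$ small enough that $\eta_\delta>\eta$ and then chaining Theorem \ref{smallness_of_boundary_theorem}, Theorem \ref{regular_splines_theorem}, and Theorem \ref{thm:splinesImpliesWavelets}, with the trivial observation that a H\"older exponent can always be lowered. Nothing essential differs from the paper's proof.
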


We conclude by summarizing some of the related open problems:

\begin{openproblem}
Do the following systems of functions exist in every doubling metric (measure) space:
\begin{enumerate}
  \item\label{it:cubes} a system of independent random dyadic cubes with boundary exponent one?
  \item\label{it:splines} a system of Lipschitz-continuous splines?
  \item\label{it:wavelets} a basis of Lipschitz-continuous wavelets with exponential localization?
  \item\label{it:perfect} a basis of H\"older-continuous wavelets with perfect localization?
\end{enumerate}
\end{openproblem}

By Theorem~\ref{regular_splines_theorem}, an affirmative answer to \eqref{it:cubes} would imply an affirmative answer to \eqref{it:splines}, which would in turn imply an affirmative answer to \eqref{it:wavelets} by Theorem~\ref{thm:splinesImpliesWavelets}, but potentially there could be other approaches to these problems. The last question~\eqref{it:perfect} appears to be disjoint from these direct chains of implications, but it is nevertheless recorded due to its natural proximity.

\appendix

\section{Construction of dyadic points}

In this appendix, we give a proof of Theorem  \ref{construction_of_dyadic_points} on the existence of systems of dyadic points in an abstract metric space.

For clarity, let us define couple of different types of sets:
\begin{defin*}
  Let $S$ be a set and $E$ be another set, which may or may not contain $S$.
  \begin{enumerate}
    \item[i)] The set $S$ is \emph{$r$-separated} if any two distinct points $x,y \in S$ satisfy $d(x,y) \ge r$.
    \item[ii)] The set $S$ is \emph{maximal $r$-separated within $E$} if $S\cup\{z\}$ is not $r$-separated for any $z \in E \setminus S$.
    \item[iii)] A set $R$ is \emph{$r$-separated extension of $S$ within $E$} if $S \subseteq R \subseteq S \cup E$ and $R$ is also $r$-separated. 
  \end{enumerate}
\end{defin*}
For example, the set $\{1\}$ is a maximal $1$-separated within the interval $(0,1)$. It is an immediate consequence of Zorn's lemma that if $S$ is $r$-separated and $E$ is another set, then there exists a maximal $r$-separated extension of $S$ within $E$.\\

\begin{proof}[Proof of Theorem \ref{construction_of_dyadic_points}]
  We construct sets $\mathcal{C}^n_k$, for integers $0\leq k<n<\infty$, with the following properties:
  \begin{enumerate}
    \item\label{it:monot} $\mathcal{C}^n_k$ is increasing in $n$ and decreasing in $k$;
    \item\label{it:sep} $\mathcal{C}^n_k$ is $\Delta^k$-separated;
    \item\label{it:maximal} $\mathcal{C}^n_k$ is maximal $\Delta^k$-separated within $B(x_0,R^n_k)$, where $R^n_k:=\Delta^n-\sum_{i=k}^{n-1}\Delta^i$;
    \item\label{it:region} $\mathcal{C}^n_k\subseteq B(x_0,R^n_k)\cup\mathcal{C}^n_{k+1}$ if $k+1<n$ and $\mathcal{C}^n_{n-1}\subseteq B(x_0,R^n_{n-1})$;
    \item\label{it:unionSep} $\mathcal{C}^n_i\cup\mathcal{C}^{n+1}_k$ is  $\Delta^i$-separated for all $i\leq k$.
  \end{enumerate}

  The construction will proceed recursively along the following ordering of the pairs $(n,k)$:
  \begin{equation*}
  \begin{split}
    &(1,0)\prec(2,1)\prec(2,0)\prec(3,2)\prec(3,1)\prec(3,0)\prec\ldots \\
    &\qquad \prec(m,0)\prec(m+1,m)\prec\ldots\prec(m+1,j+1)\prec(m+1,j)\prec\ldots
  \end{split}
  \end{equation*}
  In the initial step, let $\mathcal{C}^1_0\subseteq B(x_0,\Delta-1)$ be a maximal one-separated set that contains $x_0$.

  In the inductive step, we assume that $\mathcal{C}^n_k$ has already been constructed for all $(n,k)\prec(m+1,j)$, in such a way that all above listed properties \eqref{it:monot} through \eqref{it:unionSep} hold, whenever the relevant indices are smaller than $(m+1,j)$ with respect to $\prec$.  Our task is to construct $\mathcal{C}^{m+1}_j$ in such a way that these properties stay valid whenever the relevant indices are smaller than or equal to $(m+1,j)$.

  \begin{figure}[h]
    \begin{center}
      \setlength{\fboxsep}{0pt}
      \setlength{\fboxrule}{1pt}
      \fbox{\includegraphics[scale=0.27]{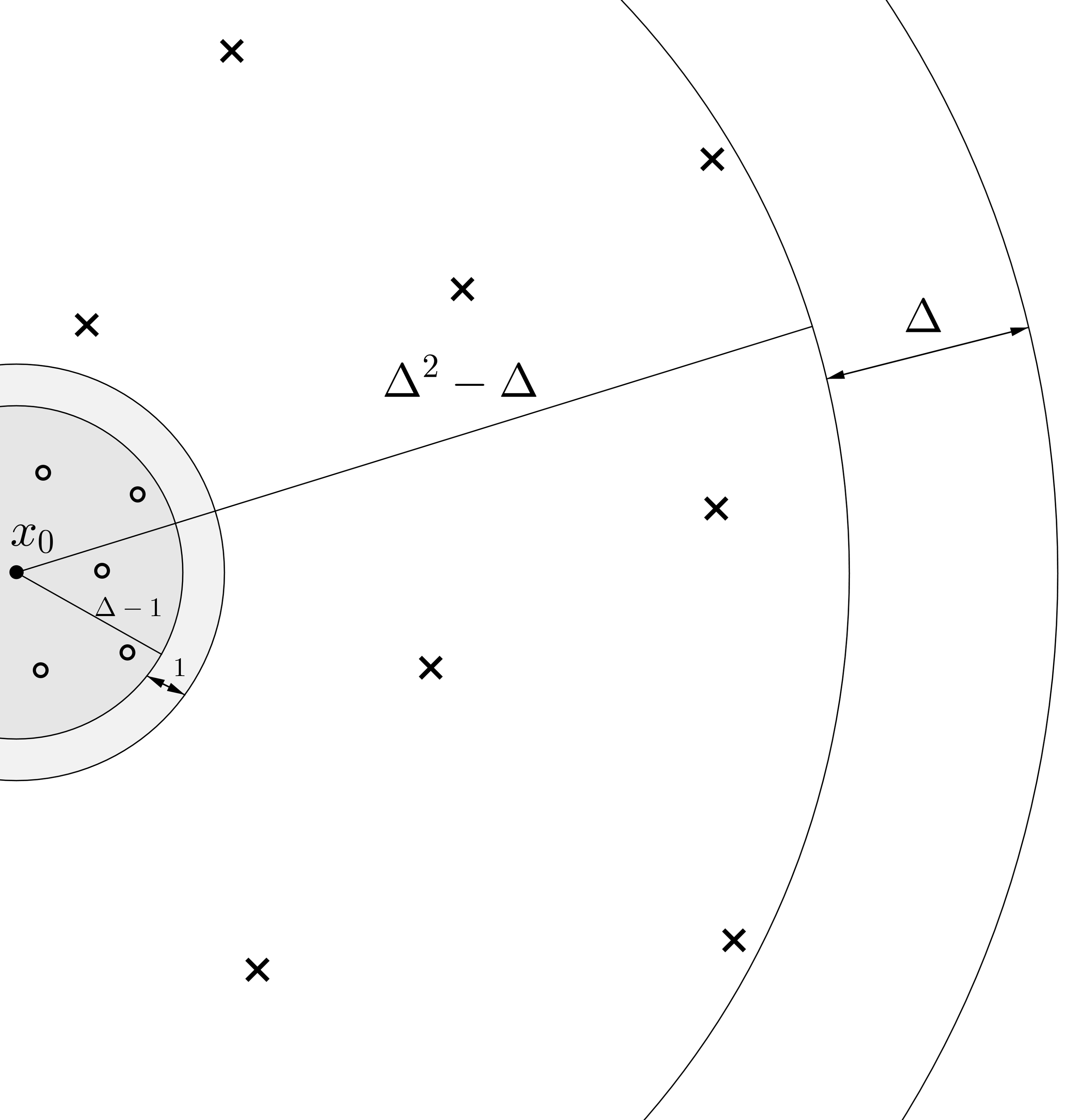}} \fbox{\includegraphics[scale=0.27]{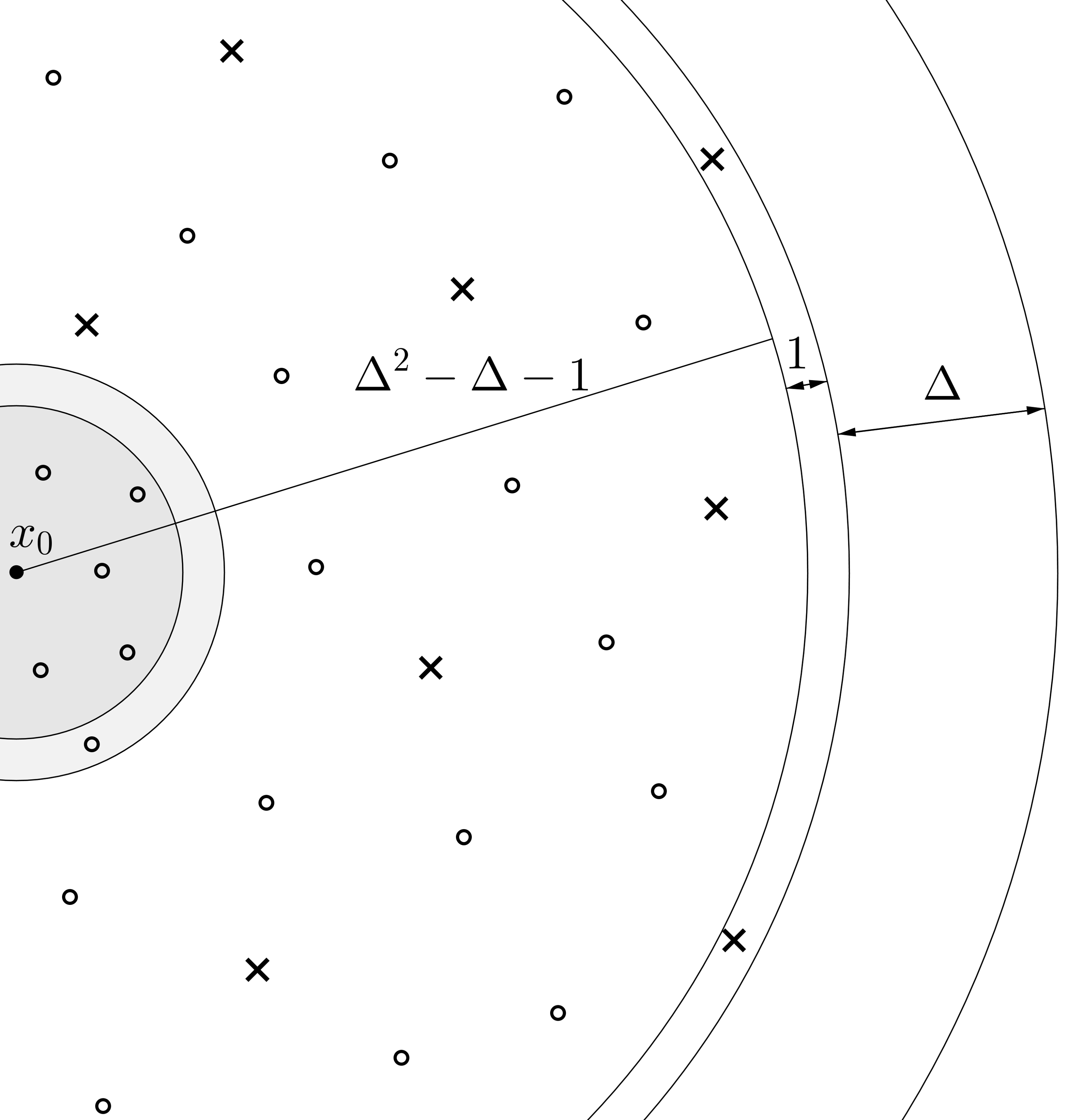}}
      \caption{The first three steps of the construction. Left: The points $\mathcal{C}^1_0$ (shown as~$\circ$) in the innermost disk $B(x_0,\Delta-1)$, and the points $\mathcal{C}^2_1$ (shown as~$\times$) in the annulus $B(x_0,\Delta^2-\Delta)\setminus B(x_0,\Delta)$. Right: The points $\mathcal{C}^2_0$, which includes $\mathcal{C}^1_0\cup\mathcal{C}^1$ as well as new points (shown as~$\circ$) in the annulus $B(x_0,\Delta^2-\Delta-1)\setminus B(x_0,\Delta-1)$.} 
    \end{center}
  \end{figure}

  \subsubsection*{Case $j=m$}
  Let $\mathcal{C}^{m+1}_m\subseteq B(x_0,R^{m+1}_m)=B(x_0,\Delta^{m+1}-\Delta^m)$ be a maximal $\Delta^m$-separated set that contains $x_0$. Thus $\mathcal{C}^{m+1}_m=\{x_0\}\cup\mathcal{N}^{m+1}_m$ with $\mathcal{N}^{m+1}_m\subseteq B(x_0,\Delta^m)^c$.

  From \eqref{it:region} it follows that $\mathcal{C}^m_k\subseteq B(x_0,R^m_{m-1})=B(x_0,\Delta^m-\Delta^{m-1})$ for all $k\leq m-1$. Then it is clear that $\mathcal{C}^m_k\cup\mathcal{C}^{m+1}_m$ is $\Delta^k$-separated. The other properties are immediate to check.

  \subsubsection*{Case $j<m$}
  Let $\mathcal{C}^{m+1}_j$ be a maximal $\Delta^j$-separated extension of $\mathcal{C}^m_j\cup\mathcal{C}^{m+1}_{j+1}$ (which is itself $\Delta^j$-separated by \eqref{it:unionSep}) within $B(x_0,R^{m+1}_j)$. Thus $\mathcal{C}^{m+1}_j=\mathcal{C}^m_j\cup\mathcal{C}^{m+1}_{j+1}\cup\mathcal{N}^{m+1}_j$, where $\mathcal{N}^{m+1}_j\subseteq B(x_0,R^{m+1}_j)\setminus B(x_0,R^m_j)$. (We know that $\mathcal{N}^{m+1}_j\subseteq B(x_0,R^m_j)^c$, since $\mathcal{C}^m_j$ is already maximal $\Delta^j$-separated within $B(x_0,R^m_j)$ by \eqref{it:maximal}.)

  Most of the properties are straightforward to verify, and we concentrate on \eqref{it:unionSep} for $(n+1,k)=(m+1,j)$. We proceed by backwards induction of $i\leq j$.

  If $i=j$, then $\mathcal{C}^m_j\subseteq\mathcal{C}^{m+1}_j$, so the union is just $\mathcal{C}^{m+1}_j$, which is $\Delta^j$-separated by construction.

  Let then $i<j$, and suppose that the $\Delta^{i+1}$-separation of $\mathcal{C}^m_{i+1}\cup\mathcal{C}^{m+1}_j$ has already been verified. We need to check that $\mathcal{C}^m_i\cup\mathcal{C}^{m+1}_j$ is $\Delta^i$-separated, and we know this for both sets individually, so we need to check that $d(x,y)\geq\Delta^i$ for any $x\neq y$ such that $(x,y)\in\mathcal{C}^m_i\times\mathcal{C}^{m+1}_j$.
  For contradiction, assume that $d(x,y) < \Delta^i$. Since $\mathcal{C}_{i+1}^m \cup \mathcal{C}_j^{m+1}$ is $\Delta^i$-separated by the induction assumption, from \eqref{it:region} it follows that $x \in B(x_0,R_i^m)$. Furthermore, $y \in B(x_0,R_i^m + \Delta^i) = B(x_0,R_{i+1}^m)$. Since 
  $y \in \mathcal{C}_j^{m+1} \subseteq \mathcal{C}_{j-1}^{m+1} \subseteq \ldots \subseteq \mathcal{C}_{i+1}^{m+1}$ and the set $\mathcal{C}_{i+1}^m$ is maximal $\Delta^{i+1}$-separated within $B(x_0,R_{i+1}^m)$, it follows that 
  $y \in \mathcal{C}_{i+1}^m$. Then \eqref{it:monot} implies that $y \in \mathcal{C}_i^m$, which is a contradiction since $\mathcal{C}_i^m$ is $\Delta^i$-separated by \eqref{it:sep}. Hence, $\mathcal{C}_i^m \cup \mathcal{C}^{m+1}$ is $\Delta^i$-separated.\\

  It is easy to verify that $R_k^n \to \infty$ as $n \to \infty$ for every $k \in \N$ if and only if $\Delta > 2$. Thus, for a fixed $\delta \in (0,1/2)$ we can set $\Delta = 1/\delta$ and $\A_k = \bigcup_{n=1}^\infty \mathcal{C}_{-k}^n$. After this, the existence of the sets $\A_k$ for $k > 0$ follows simply from Lemma \ref{maximal_subsets} (if the space $(X,d)$ is doubling) or Zorn's lemma (if the space $(X,d)$ is not doubling).
\end{proof}

\begin{remark*}
  Choosing the set $\A_{k-1}$ after we have chosen the whole set $\A_k$ might not be possible since such set $\A_{k-1}$ might not exist. We can see this by a simple 
  example. Let $\delta = 1/3$, $X = B(0,3) \cup B(8,3) \subseteq \R$ and $\A_{-1} = \{0,8\}$. Now 
  $0 \in B(8,(1/3)^{-2})$ and $8 \in B(0,(1/3)^{-2})$ but $X \setminus B(0,(1/3)^{-2}) \neq \emptyset \neq X \setminus B(8,(1/3)^{-2})$, so there does not exist a set $\A_{-2} \subseteq \A_{-1}$ such that 
  the points of $\A_{-2}$ are $\delta^{-2}$-separated and $\min_{z \in \A_2} d(x,z) < \delta^{-2}$ for every $x \in X$.
\end{remark*}

\bibliography{dyadic}
\bibliographystyle{plain}

\end{document}